\newtheorem{thr}{Theorem}[section]
\newtheorem{lem}[thr]{Lemma}
\newtheorem{observation}[thr]{Observation}
\theoremstyle{definition}
\newtheorem{defn}[thr]{Definition}
\newtheorem{ex}[thr]{Example}
\newtheorem{pr}[thr]{Problem}
\theoremstyle{remark}
\newtheorem{remr}[thr]{Remark}
\numberwithin{equation}{section}
\def\psd{\operatorname{PSD\, rank}}
\def\R{\mathbb{R}}
\def\ca{\circledast}
\def\tr{\operatorname{tr}}
\def\rk{\operatorname{rank}}
\def\M{\mathcal{M}}
\def\nov{\overline{n}}
\def\H{\mathcal{H}}
\def\A{\mathcal{A}}
\def\B{\mathcal{B}}
\def\C{\mathcal{C}}
\begin{document}

% \title[short text for running head]{full title}
%\title{A universality theorem for PSD rank}
\title[The complexity of positive semidefinite matrix factorization]{The complexity of positive \\ semidefinite matrix factorization}

%    author one information
% \author[short version for running head]{name for top of paper}
\author{Yaroslav Shitov}
\address{National Research University Higher School of Economics, 20 Myasnitskaya Ulitsa, Moscow 101000, Russia}
\email{yaroslav-shitov@yandex.ru}

%    \subjclass is required.
\subjclass[2010]{15A23, 90C22}

\keywords{PSD rank, semialgebraic set, computational complexity}

%\date{October 3, 2011 and, in revised form, February 22, 2012}

%\dedicatory{}

%    "Communicated by" -- provide editor's name; required.
%\commby{Jim Haglund}

%    Abstract is required.
\begin{abstract}
Let $A$ be a matrix with nonnegative real entries. The PSD rank of $A$ is the smallest integer $k$ for which there exist $k\times k$ real PSD matrices $B_1,\ldots,B_m$, $C_1,\ldots,C_n$ satisfying $A(i|j)=\operatorname{tr}(B_iC_j)$ for all $i,j$.
This paper determines the computational complexity status of the PSD rank. Namely, we show that the problem of computing this function is polynomial-time equivalent to the existential theory of the reals.
\end{abstract}

\maketitle

\section{Introduction}

Let $A$ be a nonnegative matrix, that is, a matrix with real nonnegative entries. The \textit{positive semidefinite (PSD) rank} of $A$ is the smallest integer $k$ for which there exist $k\times k$ real PSD matrices $B_1,\ldots,B_m$, $C_1,\ldots,C_n$ satisfying $A(i|j)=\operatorname{tr}(B_iC_j)$ for all $i,j$. (Here and in what follows, $A(i|j)$ stands for an $(i,j)$th entry of $A$.) As explained in~\cite{wcrpsd}, the motivation for the definition of PSD rank came from geometric problems concerning the representation of convex sets for linear optimization. Namely, the PSD rank is a lower bound on the sizes of semidefinite programs that formulate a linear optimization problem over the polytope corresponding to a given matrix, see the discussion in~\cite[p. 10]{PSD}. A strongly related quantity is the \textit{nonnegative rank}, and it is defined in the same way but with an additional assumption that the matrices $B_i$ and $C_j$ are diagonal. Actually, the PSD rank has been introduced as a semidefinite analogue of the nonnegative rank, which serves as a lower bound for the sizes of linear programs~\cite{Yan}. We note that the semidefinite and linear programming relaxations are concepts that belong to a rapidly developing area of modern computer science. We mention the paper~\cite{FMPTdW}, which provides an exponential lower bound for sizes of linear relaxations of several classical NP-complete problems, and the paper~\cite{LRS}, which solves the same problem for semidefinite relaxations.

In our paper, we discuss the computational complexity of evaluating the PSD rank. The decision version of this problem can be formulated as follows.

%\begin{pr}\label{prob11} (PSD RANK.) Given a matrix $A$ with nonnegative integer entries and an integer $r$.Is $PSD\, rank(A)\leqslant r$?\end{pr}

%\begin{pr}\label{prob11} (PSD RANK.) Given a matrix %$A\in\mathbb{Z}_+^{m\times n}$ and $r\in\mathbb{Z}_+$.
%Is $PSD\, rank(A)\leqslant r$?
%\end{pr}

\begin{pr}\label{prob11} (PSD RANK.) Given $A\in\mathbb{Z}^{m\times n}$ and $r\in\mathbb{Z}$. Is $\operatorname{PSD\, rank}(A)\leqslant r$?
\end{pr}

The complexity status of this problem has been wide open. Actually, it was conjectured that PSD rank is NP-hard to compute, see Problem~9.6 in~\cite{PSD} and a discussion in~\cite[p. 10]{wcrpsd}. In this paper, we deduce this conjecture as a corollary of a stronger result. Namely, we prove that PSD RANK is polynomial-time equivalent to the problem known as the \textit{Existential theory of the reals}. A similar equivalence is still an open question for the nonnegative rank, which is only known to be NP-hard, see the original paper by Vavasis~\cite{Vavasis} and a short proof in~\cite{myshort}.

\section{$\exists\mathbb{R}$}

In this section, we recall the known facts on the existential theory of the reals and formulate our main result. To begin with, we recall the standard description of the yes-instances in Problem~\ref{prob11} as a semialgebraic set. If $B_i$ and $C_j$ are $k\times k$ PSD matrices, we can write their Cholesky decompositions $B_i=\sum_{t=1}^ku_{it}u_{it}^\top$ and $C_j=\sum_{\tau=1}^kv_{j\tau}v_{j\tau}^\top$ with $u_{it},v_{j\tau}\in\mathbb{R}^k$. We get $$\operatorname{tr}(B_iC_j)=\sum_{t,\tau}\operatorname{tr}\left(u_{it}u_{it}^\top v_{j\tau}v_{j\tau}^\top\right)=\sum_{t,\tau}(u_{it}^\top v_{j\tau})^2.$$ Therefore, a matrix $A$ has PSD rank at most $k$ if and only if the equations
\begin{equation}\label{eq11}A(i|j)=\sum_{t,\tau}(u_{it1}v_{j\tau1}+\ldots+u_{itk}v_{j\tau k})^2\end{equation}
have a simultaneous solution $u_{its},v_{j\tau\sigma}\in\mathbb{R}$. In other words, PSD RANK is the problem to decide whether the semialgebraic set~\eqref{eq11} is non-empty. The existential theory of the reals (ETR) is a more general problem in which we allow arbitrary first-order quantifier free formulas over $(\R,+,-,*,0,1)$ instead of~\eqref{eq11}.

\begin{pr}\label{prob12}
(EXISTENTIAL THEORY OF THE REALS.)

\noindent Is a given first-order quantifier-free formula over $\R$ satisfiable?
\end{pr}

%We do not use Problem~\ref{prob12} in our proof explicitly, so we do not need to recall the formal definitions of the concepts used in the formulation.
Defining $\exists\mathbb{R}$ as the class of decision problems that admit a polynomial reduction from ETR, we see that PSD RANK lies in $\exists\R$.

\begin{remr}\label{remr11}
If $k>\min\{m,n\}$ in Problem~\ref{prob11}, then $(A,k)$ is trivially a yes-instance, so we can assume without loss of generality that $k\leqslant\min\{m,n\}$. Now the length of~\eqref{eq11} is polynomial in the length of $(A,k)$, so~\eqref{eq11} is indeed a polynomial reduction from PSD RANK to ETR.
\end{remr}

The hardest problems in $\exists\R$ are called $\exists\mathbb{R}$-\textit{complete} problems. That is, a problem $\Pi$ is $\exists\mathbb{R}$-complete if and only if there are polynomial reductions from $\Pi$ to ETR and from ETR to $\Pi$. Our main result states that PSD RANK belongs to this class.

\begin{thr}\label{thr11}
PSD RANK is $\exists\mathbb{R}$-complete.
\end{thr}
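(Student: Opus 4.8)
The inclusion of PSD RANK in $\exists\mathbb{R}$ has already been observed through the reduction~\eqref{eq11} together with Remark~\ref{remr11}, so the task is to prove $\exists\mathbb{R}$-hardness, i.e.\ to give a polynomial-time reduction from ETR to PSD RANK. The plan is to reduce not from ETR directly but from a convenient normal form. A standard preprocessing rewrites any quantifier-free formula over $\R$ as a conjunction of equations of the three elementary types $x=1$, $x+y=z$, and $x\cdot y=z$ in real variables: auxiliary variables split an arbitrary polynomial into binary operations, integer constants are assembled by repeated doubling and addition, and (strict or weak) inequalities are eliminated by introducing square or reciprocal slack variables. One checks that this normal-form feasibility problem is $\exists\mathbb{R}$-complete, and it has the pleasant feature that the target matrix below can be built with entries in $\{0,1\}$ no matter how large a solution the system may require.

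Given such a system $S$ in variables $x_1,\dots,x_N$, I would construct in polynomial time a nonnegative integer matrix $A=A(S)$ and an integer $r$ such that $\psd(A)\le r$ if and only if $S$ is satisfiable. The matrix $A$ is assembled from blocks. The core is a rigid \emph{frame} block whose sole purpose is to constrain every size-$r$ PSD factorization of $A$: it is built from small, well-understood gadgets — prominently all-ones blocks, whose PSD rank is $1$ and whose rank-one factorizations are unique up to scaling — glued together (as a suitable ``direct sum'' with zero off-diagonal blocks, which forces the corresponding factors $B_i$, $C_j$ to multiply to zero) so that the frame has PSD rank exactly $r$ and admits, up to the obvious congruence symmetry $B_i\mapsto MB_iM^{\top}$, $C_j\mapsto M^{-\top}C_jM^{-1}$ of a PSD factorization, essentially a single factorization of size $r$. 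Proving this rigidity is where the real content of the reduction sits, and I would isolate it as a lemma.

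Once the frame has pinned the matrices $B_i$ and $C_j$ down to the natural change of basis, the entries of $A$ lying in a few additional ``variable'' rows and columns become explicit low-degree polynomials in a small pool of parameters that are still free — one parameter $t_i$ per variable $x_i$ of $S$. Addition constraints $x+y=z$ are reproduced using additivity of the trace under block-diagonal sums. The delicate ingredient is a multiplication gadget: with rank-one factors $uu^\top$ where $u$ is affine-linear in the parameters one has $\tr\bigl((uu^\top)(vv^\top)\bigr)=\langle u,v\rangle^2$, a quadratic in which the lower-order part is already fixed by the frame, so that after subtracting (by means of auxiliary rows and columns) the contributions that are pinned, the cross term $t_it_j$ is isolated and can be forced, through a chosen entry of $A$, to equal $t_k$; the sign ambiguities inherent in squares are controlled by further small gadgets. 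Choosing the entries of $A$ in the variable rows and columns as dictated by the equations of $S$ makes $\psd(A)\le r$ equivalent to the solvability of $S$.

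It then remains to verify the two implications and the size bounds. If $S$ has a solution, substituting the values into the canonical factorization yields an explicit size-$r$ PSD factorization of $A$, which is routine. Conversely, given $\psd(A)\le r$, the rigidity lemma recovers the canonical skeleton, the parameters $t_i$ are read off, and one checks that they satisfy every equation of $S$. Since $A$ and $r$ have bit-length polynomial in $|S|$ and the construction is explicit, this is a polynomial-time reduction; combined with PSD RANK $\in\exists\mathbb{R}$ it gives the theorem. The main obstacle throughout is the rigidity lemma: one must exclude ``cheating'' factorizations that exploit the boundary of the PSD cone, use rank-deficient or unexpectedly structured factors, or permute the roles of the frame's rows and columns — only when all of these are ruled out do the variable entries encode $S$ and nothing more. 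Everything else is bookkeeping and explicit linear algebra.
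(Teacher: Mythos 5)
Your outline reproduces the correct high-level strategy (membership in $\exists\R$ via Remark~\ref{remr11}, then a reduction from a normal form of ETR by building a rigid ``frame'' around some free parameters that encode the variables), but the proof has a genuine gap where you yourself locate ``the real content'': the rigidity lemma is asserted, not proven, and the difficulties it must overcome are precisely the ones that the paper's entire machinery is built to dodge. A PSD factorization of size $r$ carries far more symmetry than your outline accounts for. Beyond the congruence $(B_i,C_j)\mapsto(MB_iM^\top,M^{-\top}C_jM^{-1})$, each factor $B_i$ admits infinitely many Cholesky-type splittings $B_i=\sum_t u_{it}u_{it}^\top$, factors may be rank-deficient in uncontrolled ways, and the only quantity the matrix entries constrain is $\tr(B_iC_j)=\sum_{t,\tau}\langle u_{it},v_{j\tau}\rangle^2$ -- a sum of squares, so every linear relation you hope to impose is visible only up to signs and up to the way mass is distributed among the summands. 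Your phrase ``the sign ambiguities inherent in squares are controlled by further small gadgets'' is exactly where the proof is missing: there is no indication how to rule out factorizations that cheat by flipping signs across different rank-one pieces. The paper does not fight this battle at all. It first reduces to a \emph{rank-three real completion} problem via the sqrt-rank argument (Lemma~\ref{lem34}, using the sqrt condition of Definition~\ref{defsqrt}): once the completion is written as $C'\circ C'$ with $\operatorname{rank} C'\le3$, the entries are genuine dot products $\langle p_u,l_v\rangle$, not squares, and the sign problem disappears, so that the whole rigidity argument (Lemma~\ref{lem73}) can be run with ordinary projective geometry in $\R^3$. That intermediate reduction, together with the gadget $P(\alpha)$ and Lemma~\ref{pr3} which convert ``PSD rank of a completion'' into ``PSD rank of a single integer matrix,'' is the substance your sketch omits.

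A second, independent gap is the claim that a bound on solution magnitudes is unnecessary and that the target matrix can have $\{0,1\}$ entries. In the paper's reduction the gadget $KP(\alpha)$ only has PSD rank $2$ for $\alpha\in[0,4]$, so Lemma~\ref{lem41} needs the completion entries to be at most $K$; this is exactly why Theorem~\ref{lem14} (via the Grigoriev--Vorobjov bound, Theorem~\ref{thrGV}) is invoked first to push the witnesses into $[-1,1]^n$. Your normal form $x=1$, $x+y=z$, $xy=z$ is indeed $\exists\R$-complete, but its solutions can have doubly exponential magnitude, and you give no mechanism by which a fixed nonnegative integer matrix $A$ can ``see'' a solution of unbounded size without its entries or its dimension blowing up. Until both the rigidity lemma and this magnitude issue are resolved, the proposal is an intention to prove the theorem rather than a proof; the parts you do spell out (membership in $\exists\R$, the use of a normal form, the idea of a frame plus free parameters) are consistent with the paper, but the decisive steps are absent.
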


The proof of Theorem~\ref{thr11} goes as follows. In Section~3, we discuss various problems in $\exists\R$, and we get a suitable restricted version of ETR that is still $\exists\mathbb{R}$-complete. In Section~4, we illustrate the definition of PSD rank on concrete examples, and we mention several known results that we need in our proof. In Section~5, we introduce the matrix completion problem, which is an essential technique of our proof. In Section~6, we study the connection between this problem and PSD RANK, and we get ready to prove the main result. In Section~7, we finalize the proof of Theorem~\ref{thr11} and discuss a related question arisen from a recent paper by Fawzi, Gouveia, and Robinson.

\section{Several $\exists\R$-complete problems}

The ETR problem remains $\exists\R$-complete when restricted to formulas consisting of a single polynomial equation, see Proposition~3.2 in the survey~\cite{Mato}. We need a slightly refined version of this statement, so we reproduce the proof as in~\cite{Mato} for the sake of completeness. We define a \textit{monomial} as $\pm x_{i_1}x_{i_2}\ldots x_{i_n}$, where each $x_{i_k}$ is a variable. We say that a polynomial $f\in\mathbb{Z}[x_1,\ldots,x_n]$ is in the \textit{standard form} if it is written as a sum of monomials.

\begin{thr}\label{lem12}
It is $\exists\mathbb{R}$-complete to decide whether an equation $f(x_1,\ldots,x_n)=0$ has a solution, even if $f$ is written as a sum of monomials.
\end{thr}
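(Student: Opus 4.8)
The plan is to verify the two reductions required by $\exists\R$-completeness. One direction is immediate: deciding solvability of a single polynomial equation $f=0$ is literally a special case of ETR, so the problem lies in $\exists\R$. The content is therefore the reduction from ETR. Starting from an arbitrary quantifier-free formula $\Phi$ over $(\R,+,-,*,0,1)$, I would first use De~Morgan's laws to push all negations down to the atoms, so that $\Phi$ becomes a positive Boolean combination of conditions of the form $g=0$, $g\neq 0$, $g>0$, and $g\geqslant 0$ for polynomials $g$ with integer coefficients.

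Next I would remove the inequalities and the Boolean connectives by introducing fresh real variables, using the equivalences $g\geqslant 0\Leftrightarrow\exists y\,(g=y^2)$, $g>0\Leftrightarrow\exists y\,(gy^2=1)$, $g\neq 0\Leftrightarrow\exists y\,(gy=1)$, together with $(p=0)\vee(q=0)\Leftrightarrow(pq=0)$ and $(p=0)\wedge(q=0)\Leftrightarrow(p^2+q^2=0)$. After finitely many such replacements $\Phi$ turns into a single polynomial equation $P=0$ over the original variables and the new ones, whose real solution set projects exactly onto the assignments satisfying $\Phi$; in particular $P=0$ is solvable if and only if $\Phi$ is satisfiable.

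The only thing that needs care is size: writing $P$ out as a sum of monomials could be exponentially long. To avoid this I would not form $P$ explicitly, but keep a \emph{system} of equations, introducing one auxiliary variable per arithmetic operation performed in the construction above: whenever the construction forms $u\cdot v$, $u+v$, or $u-v$, I record a new variable $w$ together with its defining equation $w-uv=0$, $w-u-v=0$, etc. (and I build integer constants from the empty-product monomial $1$ by repeated doubling and addition, so that a coefficient of bit-length $\ell$ costs only $O(\ell)$ variables). Each resulting equation $e_t=0$ is a sum of at most three monomials with $\pm1$ coefficients and degree at most two, and there are polynomially many of them, say $e_1,\dots,e_N$. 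I would then collapse the whole system into one equation
\[ f:=\sum_{t=1}^{N} e_t^2 \;=\; 0, \]
which over $\R$ is equivalent to $e_1=\dots=e_N=0$. Expanding each $e_t^2$ yields a bounded number of monomials of degree at most four, and any coefficient $\pm2$ is rewritten as a sum of two equal $\pm1$-monomials, so $f$ is presented as a sum of $O(N)$ monomials, i.e.\ in standard form and of polynomial length in the size of $\Phi$.

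The main obstacle is precisely this bookkeeping rather than any conceptual difficulty: one must check that every replacement preserves satisfiability in both directions — a solution of the enlarged system restricts to a solution of the original condition, and, conversely, any assignment satisfying $\Phi$ extends to a solution of $f=0$ by evaluating the auxiliary variables through their defining equations and choosing appropriate square roots — and that the count of auxiliary variables and monomials stays polynomial, with the treatment of large integer coefficients being the only nonobvious point. Since none of this is deep, it is enough to reproduce the argument of~\cite{Mato}; the reason to spell it out here is the guarantee that the produced instance is genuinely a sum of monomials, which is the feature exploited in the later sections.
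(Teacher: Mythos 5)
Your proposal is correct and reaches the same end state as the paper — a polynomial-size system of short polynomial equations, collapsed into one equation by a sum of squares — but it gets there via a different Boolean gadget. The paper first renames every atom as $g>0$ (keeping negations in the Boolean skeleton), then for each such atom introduces a $\{0,1\}$-valued indicator $w_g$ forced by the equation
\[
\bigl((gu_g^2-1)^2+(w_g-1)^2\bigr)\bigl((g+v_g^2)^2+w_g^2\bigr)=0,
\]
and propagates these indicators through the formula using the polynomial encodings $1-w_a$, $w_aw_b$, $w_a+w_b-w_aw_b$ of $\neg,\wedge,\vee$ on $\{0,1\}$. You instead push negations down to the atoms, eliminate each type of inequality atom directly by an auxiliary real variable ($g\geqslant0\Leftrightarrow\exists y\,(g=y^2)$, $g>0\Leftrightarrow\exists y\,(gy^2=1)$, $g\neq0\Leftrightarrow\exists y\,(gy=1)$), and then encode $\vee$ and $\wedge$ over ``$=0$''-atoms as product and sum of squares respectively. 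Both methods require introducing one fresh variable per arithmetic/logic node to keep degrees and sizes polynomial (your ``keep a system rather than form $P$ explicitly'' is exactly the paper's step of breaking every equation down to the form $(y_{i_1}\star y_{i_2})\ast y_{i_3}=0$), and both handle large integer coefficients by a binary doubling construction. The indicator-variable route has the mild advantage of treating negation as a primitive operation, while your route avoids the somewhat elaborate atom gadget at the cost of having to case-split on the four flavors of (in)equality; the final sum-of-squares step and the bookkeeping of $\pm1$ coefficients are the same in both.
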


\begin{proof}
Let $\Phi$ be a quantifier-free formula. If $\Phi$ is a conjunction of polynomial equations $\Gamma=\{f_1=0,\ldots,f_k=0\}$ in which every $f_i$ is in standard form, then the equation $f_1^2+\ldots+f_k^2=0$ gives the desired reduction after getting rid of brackets by distributivity.

Before we proceed, we note that the use of negations allows us to assume that every atom in $\Phi$ has the form $g>0$ for some expression $g$. We proceed with a construction of the set $\Gamma$ of polynomials which have a simultaneous solution if and only if $\Phi$ is satisfiable. We start with $\Gamma=\varnothing$. For any atom $g>0$, we add to $\Gamma$ the equation
$$\left((gu_g^2-1)^2+(w_g-1)^2\right)\left((g+v_g^2)^2+w_g^2\right)=0,$$
where $u_g,v_g,w_g$ are new variables, and $w_g$ represents the logical value of the corresponding atom (that is, we have $w_g=1$ if $g>0$ and $w_g=0$ otherwise). Further, if the variables $w_a,w_b$ represent the values of Boolean variables $a,b$, then $1-w_a$, $w_aw_b$, $w_a+w_b-w_aw_b$ represent $\neg a$, $a\wedge b$, $a\vee b$, respectively. We can add the corresponding polynomials to $\Gamma$, and eventually we get a variable that represents the value of $\Phi$.

Finally, we assume that an equation $f=0$ in $\Gamma$ is such that $f=(u\star v)\ast w$ with $\ast,\star\in\{+,-,\cdot\}$, and at least one of the expressions $u,v,w$ is not a variable. Then we can add new variables $\varphi_u, \varphi_v, \varphi_w$ and replace $f=0$ in $\Gamma$ by the four equations $u-\varphi_u=0$, $v-\varphi_v=0$, $w-\varphi_w=0$, $(\varphi_u\star \varphi_v)\ast \varphi_w=0$. This way of reasoning eventually allows us to transform all the equations in $\Gamma$ to the form $(y_{i_1}\star y_{i_2})\ast y_{i_3}=0$, where each $y_{j}$ is either a variable or a constant $0$ or $1$. All the discussed transformations require time linear in the length of $\Phi$, so the proof is complete.
\end{proof}

We need a geometric result by Grigoriev and Vorobjov to proceed with a more restricted version of ETR. The following theorem is a special case of Lemma~9 in~\cite{GriVor}.

\begin{thr}\label{thrGV}
Let $L$ be the bit length of a polynomial $f\in\mathbb{Z}[x_1,\ldots,x_n]$. Then every connected component of the set $\{f=0\}$ has a non-empty intersection with the ball of the radius $2^{2^{CL}}$ centered at the origin, where $C$ is an absolute constant.
\end{thr}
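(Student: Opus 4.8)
The plan is to locate in each connected component $W$ of the real variety $V=\{f=0\}$ a point that is an isolated real solution of an auxiliary polynomial system whose bit length is polynomially bounded in $L$, and then to finish with the classical bounds on isolated solutions of polynomial systems: their coordinates are algebraic numbers whose degree and height are controlled by the degrees and coefficient sizes of the system, hence their absolute values are at most doubly exponential in the bit length of the system. If $0\in W$ there is nothing to prove, so assume $0\notin W$. Being a connected component of the closed set $V$, the set $W$ is closed in $\R^n$, so for any $c\in\R^n$ the proper function $x\mapsto\|x-c\|^2$ attains a minimum on $W$, say at a point $p$.

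If $V$ were smooth at $p$, then $\nabla f(p)$ would be parallel to $p-c$, so all $2\times2$ minors $(x_i-c_i)\,\partial_j f-(x_j-c_j)\,\partial_i f$ would vanish at $p$; and if $\nabla f(p)=0$ they vanish trivially. So $p$ is always a real solution of the system $S_c$ given by $f=0$ together with $(x_i-c_i)\,\partial_j f-(x_j-c_j)\,\partial_i f=0$ for $i<j$. When $V$ is smooth, $S_c$ is zero-dimensional for a generic center $c$ (finiteness of the critical locus of a generic squared-distance function on an affine variety), and such a $c$ with integer coordinates of bit length $\mathrm{poly}(L)$ exists because the centers for which $S_c$ fails to be zero-dimensional form a proper Zariski-closed set, which cannot contain all lattice points of an integer box whose side exceeds its degree; applying the root bound to $S_c$ then bounds $\|p\|$.

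The real obstacle is that $V$ need not be smooth, and its singular locus can even be positive-dimensional --- for example when $f$ is a perfect square, or a product of factors whose zero sets intersect --- in which case $S_c$ is positive-dimensional for \emph{every} $c$, because the whole singular locus of $V$ satisfies $S_c$. This is precisely the difficulty that is resolved in the work of Grigoriev--Vorobjov and of Basu--Pollack--Roy by an infinitesimal deformation. I would first replace $f$ by its squarefree part, which cuts out the same subset of $\R^n$ and still has bit length $\mathrm{poly}(L)$ by Mignotte-type bounds on divisors of an integer polynomial; then, working over the real closed field $\R\langle\varepsilon\rangle$ of Puiseux series, I would replace $V$ by a bounded smooth hypersurface obtained by adding a generic perturbation of infinitesimal magnitude and high even degree, arranged so that the connected components of its real points biject with, and lie infinitesimally close to, those of $V$. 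Running the critical-point argument over $\R\langle\varepsilon\rangle$ and then specializing $\varepsilon$ to a small rational reduces everything to the nondegenerate case. What remains --- propagating the bit-length estimates through the squarefree reduction, the deformation, and the final root bound, so that the exponent stays linear in $L$ --- is routine bookkeeping, and produces the bound $2^{2^{CL}}$ of the statement.
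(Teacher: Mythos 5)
The paper does not prove this theorem: it is quoted verbatim as a special case of Lemma~9 in Grigoriev--Vorobjov \cite{GriVor}, so there is no in-paper argument to compare step by step. Your sketch does reproduce the strategy that underlies that result and its modern treatments (Basu--Pollack--Roy): for a generic center $c$, the critical-point system of the squared distance to $c$ on $V$ yields an algebraic point in each connected component; isolated solutions of polynomial systems have doubly-exponentially bounded coordinates; and singular $V$ is handled by an infinitesimal deformation over a real closed field of Puiseux series. In that sense you have identified the right engine.

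As a proof, however, the sketch has gaps precisely where the cited source does its work. First, taking the squarefree part does not remove singularities --- two transversally intersecting reduced factors still give a singular locus --- so it cannot be what rescues the degenerate case; only the deformation does, and then the squarefree step is an unexplained detour rather than a resolution. Second, the assertion that the connected components of the deformed smooth bounded hypersurface ``biject with'' those of $V$ is false in general: a single component of $V$ (e.g.\ a union of lines through a point) splits into several after perturbation. What one actually needs, and must prove, is the weaker but still nontrivial statement that every connected component $W$ of $V$ traps at least one component of the deformed variety, so that a bounded critical point on the latter specializes as $\varepsilon\to0$ to a point of $W$; for unbounded $W$ this requires care with the high-degree bounding term. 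Third, the closing claim that propagating degree, height, and genericity estimates is ``routine bookkeeping'' undersells the matter: controlling the degree of the bad locus of centers $c$, the B\'ezout count of the deformed critical system, and the resulting root bounds so that the exponent stays \emph{linear} in $L$ (rather than quadratic or worse) is exactly the quantitative content of \cite{GriVor}. So the outline matches the cited approach, but it is not a self-contained proof; it would have been cleaner, and fully in the spirit of the paper, simply to cite Lemma~9 of \cite{GriVor} as the paper does.
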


Now we prove the $\exists\R$-completeness of the problem used in our reduction.

\begin{thr}\label{lem14}
Theorem~\ref{lem12} remains true under an additional restriction that $f$ is forbidden to have a solution outside the cube $[-1,1]^n$.
\end{thr}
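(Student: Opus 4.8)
The plan is to take the standard-form equation $f = 0$ produced by Theorem~\ref{lem12} and rescale the variables so that any solution is forced into the cube $[-1,1]^n$, without changing satisfiability. By Theorem~\ref{thrGV}, if $f = 0$ has a real solution at all, then it has one inside the ball of radius $R := 2^{2^{CL}}$ centered at the origin, where $L$ is the bit length of $f$; in particular every coordinate of that solution has absolute value at most $R$. So I would introduce the substitution $x_i = R\, y_i$ for each $i$, obtaining a new polynomial $g(y_1,\ldots,y_n) := f(Ry_1,\ldots,Ry_n)$. Then $f = 0$ is satisfiable over $\mathbb{R}$ if and only if $g = 0$ has a solution with every $|y_i| \leqslant 1$, i.e.\ a solution in the cube $[-1,1]^n$. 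The map $f \mapsto g$ clearly preserves satisfiability in the unrestricted sense as well, so this is a legitimate reduction between the two decision problems.

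The remaining issue is to check that this reduction is computable in polynomial time and that $g$ can be written as a sum of monomials of polynomially bounded length. Here the subtlety is that $R = 2^{2^{CL}}$ is doubly exponential, so writing $R$ in binary would take exponential space. However, each monomial $\pm x_{i_1}\cdots x_{i_d}$ of $f$ becomes $\pm R^d\, y_{i_1}\cdots y_{i_d}$, and the exponent $d$ is at most $n \leqslant L$, so the relevant powers of $R$ are $R, R^2, \ldots, R^L$, each of the form $2^{2^{CL}\cdot k}$ for $k \leqslant L$. I would encode these coefficients symbolically rather than in binary: introduce a single auxiliary variable $z$ together with the monomial equations forcing $z = 2^{2^{CL}}$, built up by repeated squaring (about $CL$ multiplications, each expressible as a standard-form monomial equation via the gadget $p - q\cdot q = 0$ already used in the proof of Theorem~\ref{lem12}), and then replace each monomial $\pm R^d\, y_{i_1}\cdots y_{i_d}$ by $\pm z^d\, y_{i_1}\cdots y_{i_d}$. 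Folding all of these equations into a single equation by the sum-of-squares trick (and clearing brackets by distributivity, exactly as in Theorem~\ref{lem12}) yields one standard-form polynomial equation $F = 0$ of polynomial length.

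The one thing to verify carefully is that the auxiliary variables introduced this way do not escape the cube: the repeated-squaring chain forces $z$ to equal $2^{2^{CL}}$, which is far outside $[-1,1]$. I would handle this in the same spirit as the rescaling of the original variables — namely, also rescale $z$, working instead with a variable $z' = z / 2^{2^{(C+1)L}}$ (so $|z'| \leqslant 1$) and adjusting the squaring gadgets accordingly, or, more cleanly, first perform all the substitutions and gadget-building over the $x$-variables and only at the very end apply one global rescaling $x_i = R^* y_i$ with $R^*$ large enough to dominate every coordinate of every solution of the resulting system (again finite and bounded by Theorem~\ref{thrGV} applied to that system). Either way, the bound from Theorem~\ref{thrGV} guarantees that a single, explicitly computable (doubly-exponential but symbolically representable) scaling factor suffices, and the main obstacle is purely the bookkeeping needed to keep all coefficients and all auxiliary variables simultaneously inside $[-1,1]$ while preserving the standard form. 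Once that bookkeeping is done, Theorem~\ref{lem12} gives $\exists\mathbb{R}$-completeness of the cube-restricted problem.
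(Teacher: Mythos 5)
Your high-level plan — invoke Grigoriev--Vorobjov to bound some solution by $R = 2^{2^{CL}}$, rescale into the cube, and encode the doubly-exponential scale factor symbolically by repeated squaring rather than in binary — is the correct starting point, and it is the same skeleton the paper uses. However, there are two genuine gaps in the implementation, and the first of them is not a matter of ``bookkeeping'' but requires a real idea you do not have yet.

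\textbf{Gap 1: the large auxiliary variable cannot be brought into the cube by rescaling.} Your chain $z_0 = 2$, $z_{j+1} = z_j^2$ forces $z_j = 2^{2^j}$, which is far outside $[-1,1]$, and every variant of your fix runs into the same wall. If you set $z_j' = z_j / R^*$, the anchor equation $z_0 = 2$ becomes $z_0' R^* = 2$, and $z_{j+1} = z_j^2$ becomes $z_{j+1}' = z_j'^2 R^*$: the unwritable coefficient $R^*$ reappears explicitly, so nothing has been gained. The paper avoids this entirely by running the squaring chain \emph{downward}: it anchors with $(2y_0 - 1)^2 = 0$, forcing $y_0 = 1/2$, and then $(y_{j+1} - y_j^2)^2 = 0$ forces $y_j = 2^{-2^j} \in (0,1]$ — all chain variables sit inside the cube with only $O(1)$ written coefficients. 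This in turn forces the second idea you are missing: instead of multiplying by a large $R$ (so that you need $R^d$ as a coefficient), the paper \emph{divides} by the small variable and clears denominators via homogenization, $h = y_m^d\, f(x_1/y_m,\ldots,x_n/y_m)$. Each monomial $\pm x_{i_1}\cdots x_{i_k}$ of $f$ becomes $\pm y_m^{\,d-k}\, x_{i_1}\cdots x_{i_k}$, a monomial with coefficient $\pm1$ and no constant to write out at all. Since $y_m \neq 0$, $h = 0$ iff $f(x/y_m) = 0$, and since $y_m = 2^{-2^m}$, the Grigoriev--Vorobjov bound lets us realize any root $\xi$ of $f$ by taking $x_i = y_m \xi_i$ with $|x_i| < 1$. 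Your multiplicative rescaling cannot be made to work within the monomial format; the homogenization is the essential replacement.

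\textbf{Gap 2: nothing in your construction prevents solutions outside the cube.} The target problem requires that the output polynomial be \emph{forbidden} to have any solution outside $[-1,1]^n$. Your $g(y) = f(Ry)$ satisfies the desired cube-restricted equivalence only for solutions we care about; if $f$ has some root $\xi$ with a coordinate of magnitude $> R$, then $y = \xi/R$ is a root of $g$ outside the cube, violating the promise. The paper handles this by adjoining, for each original variable $x_i$, a slack variable $z_i$ and the summand $(x_i^2 + z_i^2 - 1)^2$, which simultaneously (a) forces $x_i, z_i \in [-1,1]$ on any zero of $\varphi$, and (b) is satisfiable by $z_i = \sqrt{1-x_i^2}$ whenever $|x_i| \leqslant 1$. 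Without such a gadget your reduction does not land in the restricted problem.

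To summarize: you correctly identified Grigoriev--Vorobjov and symbolic encoding by repeated squaring as the key ingredients, but (i) the squaring chain must build a small number $2^{-2^m}$ from $y_0 = 1/2$ rather than a large one from $z_0 = 2$, (ii) the rescaling must be implemented through homogenization $y_m^d f(x/y_m)$ rather than substitution $f(Ry)$ with an explicit giant coefficient, and (iii) you need the extra constraints $(x_i^2 + z_i^2 - 1)^2 = 0$ to rule out solutions outside the cube. Points (i)--(ii) are a single coherent idea that your proposal does not contain, and point (iii) is a requirement of the restricted problem that your proposal overlooks.
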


\begin{proof}
Let $f$ be a polynomial as in Theorem~\ref{lem12}. We will construct a polynomial $\varphi$ without solutions outside $[-1,1]^n$ such that the solubility of $\varphi=0$ is equivalent to the solubility of $f=0$. Our construction requires new variables which we denote by $y_0,\ldots,y_m,z_1,\ldots,z_n$. (Here, $m$ is the number $[CL]+1$, where $L$ and $C$ are as in Theorem~\ref{thrGV}.) We define the \textit{homogenization} of $f$ as $h=y_m^d\cdot f(x_1/y_m,\ldots,x_n/y_m)$, where $d$ is the degree of $f$, and we set
$$\varphi=\sum_{j=0}^{m-1}\left(y_{j+1}-y_{j}^2\right)^2+(2y_0-1)^2+\sum_{i=1}^n\left(x_i^2+z_i^2-1\right)^2+h^2.$$
Getting rid of brackets in $\varphi$, we get a polynomial that still satisfies the requirement of Theorem~\ref{lem12}. Note that we have constructed $\varphi$ in polynomial time. % and no solutions outside $[-1,1]^n$. 

If the equality $\varphi=0$ is satisfied, the first two summands of $\varphi$ tell us that $y_j=2^{-2^j}$. The third summand shows that $x_i,z_i\in[-1,1]$, so that $\varphi$ satisfies the additional requirement of the present theorem. The fourth summand shows that $h=0$, which implies that the equality $f=0$ is satisfiable since $y_m\neq0$.

Now we check that the satisfiability of $f=0$ implies the satisfiability of $\varphi=0$. If $f(\xi_1,\ldots,\xi_n)=0$, then we can assume without loss of generality that $|\xi_i|<2^{2^m}$ by Theorem~\ref{thrGV}. We construct a point on $\{\varphi=0\}$ as
$$x_i=2^{-2^{m}}\xi_i,\,\,\,y_j=2^{-2^{j}},\,\,\,z_i=\sqrt{1-x_i^2}.$$

Therefore, we have proved that $f\to\varphi$ is a polynomial reduction of the problem in Theorem~\ref{lem12} to the desired restricted version.
\end{proof}

\section{Auxiliary results on psd rank}

We recall several known results on the PSD ranks of the unit matrix, of the entrywise square $A\circ A$ of a matrix, of the sum of matrices, and of block matrices. Example~\ref{ex21} below appeared as Example~2.11 in~\cite{PSD}, and Lemmas~\ref{lem25} and~\ref{lem24} were a part of Theorem~2.9 in the same paper. Lemma~\ref{lemblock} was a part of Theorem~2.10 in~\cite{PSD}, and Observation~\ref{obs123} is an easy consequence of Lemma~\ref{lem25}. We assume that the matrices appearing in these statements are nonnegative and allow the corresponding matrix operations and constructions of block matrices.

\begin{ex}\label{ex21}%\cite[Example 2.11]{PSD}
$\psd(I_n)=n$.
\end{ex}

\begin{lem}\label{lem25}%\cite[Theorem 2.9]{PSD}
$\psd(A+B)\leqslant \psd(A)+\psd(B)$.
\end{lem}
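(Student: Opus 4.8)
The plan is to produce an explicit PSD factorization of $A+B$ whose size is $\psd(A)+\psd(B)$, by placing optimal factorizations of the two summands in block-diagonal position. Put $p=\psd(A)$ and $q=\psd(B)$, and fix $p\times p$ PSD matrices $U_1,\dots,U_m,V_1,\dots,V_n$ with $A(i|j)=\tr(U_iV_j)$ for all $i,j$, together with $q\times q$ PSD matrices $X_1,\dots,X_m,Y_1,\dots,Y_n$ with $B(i|j)=\tr(X_iY_j)$ for all $i,j$.

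Next I would set $\widehat U_i=U_i\oplus X_i$ and $\widehat V_j=V_j\oplus Y_j$, which are $(p+q)\times(p+q)$ block-diagonal matrices. Since a block-diagonal matrix is PSD if and only if each of its diagonal blocks is PSD, every $\widehat U_i$ and every $\widehat V_j$ is PSD. Moreover, the product of two block-diagonal matrices is block-diagonal with the block-wise products on the diagonal, so $\tr(\widehat U_i\widehat V_j)=\tr(U_iV_j)+\tr(X_iY_j)=A(i|j)+B(i|j)=(A+B)(i|j)$ for all $i,j$. Hence $(\widehat U_i),(\widehat V_j)$ is a PSD factorization of $A+B$ of size $p+q$, which gives $\psd(A+B)\le p+q=\psd(A)+\psd(B)$.

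There is essentially no obstacle here: the two facts invoked — that block-diagonality is compatible with positive semidefiniteness, and that the trace of a product decomposes over diagonal blocks — are elementary linear algebra, and the nonnegativity of $A$ and $B$ is needed only to ensure that $\psd(A)$ and $\psd(B)$ are well defined in the first place. The same block-diagonal construction is, incidentally, the mechanism behind the block-matrix statement (Lemma~\ref{lemblock}) quoted later in this section.
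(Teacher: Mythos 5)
Your proof is correct and is the standard argument; the paper itself does not reprove this lemma but cites it as part of Theorem~2.9 of~\cite{PSD}, where essentially this block-diagonal construction is used. Nothing to add.
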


\begin{lem}\label{lem24}%\cite[Theorem 2.9]{PSD}
$\psd(A\circ A)\leqslant \operatorname{rank}(A)$.
\end{lem}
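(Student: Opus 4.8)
The plan is to obtain the desired PSD factorization directly from a rank factorization of $A$, converting each rank-one outer product into a rank-one PSD matrix. Set $r=\rk(A)$ and write $A=UV^\top$ with $U\in\R^{m\times r}$ and $V\in\R^{n\times r}$. Denote by $u_i\in\R^r$ the $i$th row of $U$ and by $v_j\in\R^r$ the $j$th row of $V$, so that $A(i|j)=u_i^\top v_j$ for all $i,j$.

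The only real content is a one-line identity: squaring the scalar $u_i^\top v_j$ can be rewritten as a trace of a product of two rank-one PSD matrices,
\[
(A\circ A)(i|j)=(u_i^\top v_j)^2=(u_i^\top v_j)(v_j^\top u_i)=u_i^\top\!\left(v_jv_j^\top\right)\!u_i=\tr\!\left(\left(u_iu_i^\top\right)\left(v_jv_j^\top\right)\right),
\]
where the last step uses the cyclic invariance of the trace. Hence, putting $B_i=u_iu_i^\top$ and $C_j=v_jv_j^\top$, which are $r\times r$ real PSD matrices, we obtain $(A\circ A)(i|j)=\tr(B_iC_j)$ for all $i,j$. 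By the definition of PSD rank this yields $\psd(A\circ A)\leqslant r=\rk(A)$, as claimed.

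I expect no genuine obstacle here; the argument is immediate once the rank factorization is written down. The only points meriting a remark are that $A\circ A$ is automatically a nonnegative matrix, so that $\psd(A\circ A)$ is well defined, and that the common size $r$ of the factors $B_i$ and $C_j$ is precisely the number of columns in the rank factorization of $A$, i.e.\ $\rk(A)$. (In fact the same computation shows the inequality holds for an arbitrary real matrix $A$, since only $A\circ A$ appears on the left-hand side.)
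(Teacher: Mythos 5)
Your proof is correct, and it is the standard argument for this fact. Note that the paper itself does not prove Lemma~\ref{lem24} but cites it as part of Theorem~2.9 of~\cite{PSD}; your rank-one factorization argument, which rewrites $(u_i^\top v_j)^2$ as $\tr\bigl((u_iu_i^\top)(v_jv_j^\top)\bigr)$ via cyclicity of the trace and takes $B_i=u_iu_i^\top$, $C_j=v_jv_j^\top$ as the PSD factors, is exactly the proof one finds there, so there is no meaningful divergence in approach.
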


\begin{lem}\label{lemblock}
$\psd\left(\begin{array}{c|c}
A&{B}\\\hline
O&C
\end{array}\right)\geqslant\psd(A)+\psd(C).$
\end{lem}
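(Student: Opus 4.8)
The plan is to start from a minimal PSD factorization of the block matrix and to use the zero block to split the ambient space $\R^k$ into two orthogonal parts, one carrying a factorization of $A$ and the other a factorization of $C$. Write $M$ for the block matrix in the statement and suppose $\psd(M)=k$, realized by $k\times k$ PSD matrices $E_i$ (one for each row of $M$) and $F_j$ (one for each column) with $M(i|j)=\tr(E_iF_j)$. Let $I_1,I_2$ be the sets of rows of $M$ meeting $A$ and $C$ respectively, and let $J_1,J_2$ be the sets of columns meeting $A$ and $C$ respectively, so that $M(i|j)=0$ for all $i\in I_2$ and $j\in J_1$, i.e. on the zero block.

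The first step is the algebraic core of the argument: for $i\in I_2$ and $j\in J_1$ we have $\tr(E_iF_j)=0$ with both factors PSD, and this forces $E_iF_j=0$. Indeed, diagonalising $E_i=\sum_a\lambda_av_av_a^\top$ with $\lambda_a\geqslant0$ gives $\sum_a\lambda_a\,v_a^\top F_jv_a=0$, a sum of nonnegative terms, so $F_jv_a=0$ whenever $\lambda_a>0$; hence $\operatorname{im}E_i\subseteq\ker F_j$ and $E_iF_j=0$. Now set $U=\sum_{i\in I_2}\operatorname{im}E_i$ and $W=\sum_{j\in J_1}\operatorname{im}F_j$, two subspaces of $\R^k$. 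Since each $F_j$ is symmetric, $\ker F_j=(\operatorname{im}F_j)^\perp$, and the relations $E_iF_j=0$ say precisely that $\operatorname{im}E_i\perp\operatorname{im}F_j$ for all $i\in I_2$ and $j\in J_1$; therefore $U\perp W$, and in particular $\dim U+\dim W\leqslant k$.

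Next I would check that compressing the factorization to $U$ yields a PSD factorization of $C$ of inner size $\dim U$. For $i\in I_2$ the image of $E_i$ lies in $U$ and, by symmetry, its kernel contains $U^\perp$, so $E_i$ restricts to a PSD operator $\widetilde E_i$ on $U$; for $j\in J_2$ let $\widetilde F_j$ be the compression of $F_j$ to $U$, which is again PSD. Writing $P$ for the orthogonal projection onto $U$ we have $PE_iP=E_i$ for $i\in I_2$, hence $\tr(\widetilde E_i\widetilde F_j)=\tr(E_i\cdot PF_jP)=\tr(PE_iP\cdot F_j)=\tr(E_iF_j)=C(i'|j')$, where $(i',j')$ is the entry of $C$ corresponding to $(i,j)$. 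Thus $\psd(C)\leqslant\dim U$. The symmetric argument — compressing instead to $W$, keeping the $F_j$ with $j\in J_1$ unchanged and replacing each $E_i$, $i\in I_1$, by its compression to $W$ — gives $\psd(A)\leqslant\dim W$. Combining the three inequalities yields $\psd(A)+\psd(C)\leqslant\dim W+\dim U\leqslant k=\psd(M)$, as required.

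I do not expect a genuine obstacle here: the only two nontrivial points are that a vanishing trace of a product of two PSD matrices forces the product itself to vanish, and that compression to a subspace containing all the relevant images is harmless because $PE_iP=E_i$ when $\operatorname{im}E_i\subseteq U$. Both are standard linear-algebra facts, and the remainder is bookkeeping with the block index sets.
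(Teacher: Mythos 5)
Your proof is correct. Note that this paper does not prove the lemma itself but cites it as part of Theorem~2.10 in the Fawzi--Gouveia--Parrilo--Robinson--Thomas survey on PSD rank; the argument there is essentially the one you give. Your two key steps — that $\tr(E_iF_j)=0$ for PSD matrices forces $\operatorname{im}E_i\perp\operatorname{im}F_j$, and that the spans $U=\sum_{i\in I_2}\operatorname{im}E_i$ and $W=\sum_{j\in J_1}\operatorname{im}F_j$ are therefore orthogonal, so compressing to each yields factorizations of $C$ and $A$ of inner sizes $\dim U$ and $\dim W$ with $\dim U+\dim W\leqslant k$ — are the standard ones, and the bookkeeping (including the identity $PE_iP=E_i$ when $\operatorname{im}E_i\subseteq U$, which makes the trace computation $\tr(\widetilde E_i\widetilde F_j)=\tr(E_iF_j)$ go through) is carried out correctly.
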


\begin{observation}\label{obs123}
$\psd(A|B)\leqslant\psd(A)+\psd(B)$.
\end{observation}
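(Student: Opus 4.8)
The plan is to reduce Observation~\ref{obs123} to Lemma~\ref{lem25} by writing the columnwise concatenation $(A|B)$ as a sum of two matrices of the same shape. Denoting by $O$ a zero block of the appropriate size, we have the identity $(A|B)=(A|O)+(O|B)$, so Lemma~\ref{lem25} immediately gives $\psd(A|B)\leqslant\psd(A|O)+\psd(O|B)$.

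It then remains to check that appending zero columns to a matrix does not increase its PSD rank, that is, $\psd(A|O)\leqslant\psd(A)$ and, symmetrically, $\psd(O|B)\leqslant\psd(B)$. For the first inequality I would take $k\times k$ PSD matrices $B_1,\ldots,B_m$ and $C_1,\ldots,C_n$ with $k=\psd(A)$ that realize $A(i|j)=\tr(B_iC_j)$, and then extend the list $C_1,\ldots,C_n$ by as many copies of the $k\times k$ zero matrix as there are columns in $O$. The zero matrix is PSD, and $\tr(B_i\cdot O)=0$, so the extended family realizes $(A|O)$; hence $\psd(A|O)\leqslant k=\psd(A)$. The argument for $(O|B)$ is identical, padding the $B$-side or the $C$-side makes no difference since the construction is symmetric in the two factorizations. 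Combining the three displayed inequalities proves the claim.

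There is essentially no obstacle in this argument: beyond Lemma~\ref{lem25}, the only facts used are that the all-zero matrix is positive semidefinite and that its trace against any matrix vanishes, which is precisely what makes zero columns ``free'' in a PSD factorization. (In fact one gets equality $\psd(A|O)=\psd(A)$, since deleting the zero columns turns a factorization of $(A|O)$ into one of $A$, but only the inequality is needed here.)
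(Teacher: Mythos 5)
Your proof is correct and takes exactly the route the paper intends: the paper states Observation~\ref{obs123} is an easy consequence of Lemma~\ref{lem25} without spelling it out, and your decomposition $(A|B)=(A|O)+(O|B)$ together with the zero-padding argument that $\psd(A|O)\leqslant\psd(A)$ is precisely that easy consequence made explicit.
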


\begin{observation}\label{obs124}
Let $M$ be a nonnegative matrix, assume that $(A_i)$ and $(B_j)$ are families of $k\times k$ PSD matrices such that $\tr(A_i B_j)=M(i|j)$ for all $i,j$. Denote by $d$ the dimension of the linear space spanned by the rows of all matrices in $(A_\sigma)$. Then $\psd(M)\leqslant d$.
\end{observation}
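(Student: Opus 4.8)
The plan is to compress the given size-$k$ PSD factorization of $M$ down to a size-$d$ one by restricting all the matrices involved to the $d$-dimensional subspace $V\subseteq\R^k$ spanned by the rows of the matrices $A_\sigma$. The point that makes this possible is symmetry: since each $A_i$ is PSD, hence symmetric, its row space coincides with its column space, so both the rows and the columns of every $A_i$ lie in $V$.

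First I would fix a $k\times d$ matrix $U$ whose columns form an orthonormal basis of $V$, so that $U^\top U=I_d$ and $P:=UU^\top$ is the orthogonal projector of $\R^k$ onto $V$. Because the rows and columns of $A_i$ lie in $V$, we have $PA_i=A_iP=A_i$, and therefore $A_i=PA_iP=U(U^\top A_i U)U^\top$. I would then set $\widetilde A_i:=U^\top A_i U$ and $\widetilde B_j:=U^\top B_j U$; these are $d\times d$ matrices, and they are PSD since $x^\top\widetilde A_i x=(Ux)^\top A_i(Ux)\geqslant0$ and likewise for $\widetilde B_j$.

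It remains to verify that the families $(\widetilde A_i)$ and $(\widetilde B_j)$ reproduce $M$. Using cyclicity of the trace together with the identity $PA_iP=A_i$, one computes $\tr(\widetilde A_i\widetilde B_j)=\tr(A_i\,P B_j P)=\tr\big((PA_iP)B_j\big)=\tr(A_iB_j)=M(i|j)$. Thus $M$ has a PSD factorization of size $d$, and $\psd(M)\leqslant d$ by definition.

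I do not anticipate a genuine obstacle here: the statement is essentially a change-of-coordinates observation, and the only care needed is in the bookkeeping with the projector $P$ — in particular, in checking that the compression $X\mapsto U^\top X U$ simultaneously preserves positive semidefiniteness and the trace pairings $\tr(A_iB_j)$, which are the two properties a PSD factorization must satisfy.
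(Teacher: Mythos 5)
Your proof is correct and takes essentially the same approach as the paper: both restrict the PSD factorization to the $d$-dimensional subspace spanned by the rows of the $A_\sigma$'s, relying on the symmetry of PSD matrices to ensure that this subspace contains all the relevant data. The paper phrases this as a change of orthonormal basis (via Gram--Schmidt) followed by deleting the resulting zero rows and columns, while you phrase it via the isometry $U$ and the projector $P=UU^\top$; these are the same argument in different notation.
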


\begin{proof}
%By the spectral theorem, there exists an orthogonal matrix $U$ such that the non-zero entries of $U^{-1}A_iU$ belong to (the diagonal of) the upper-left $d\times d$ submatrices. It remains to note that the families $(U^{-1}A_iU)$ and $(B'_j)$ still form a valid PSD factorization of $M$, where $B'_j$ is obtained from $(UB_jU^{-1})$ by replacing with zeros all the entries outside the upper-left $d\times d$ submatrix.
The Gram--Schmidt process allows us to construct an orthonormal basis $e_1,\ldots,e_k$ such that the vectors $e_1,\ldots,e_{k-d}$ belong to the intersection of kernels of all of the $A_\sigma$'s. We can assume without loss of generality that the tuples $(A_i)$ and $(B_j)$ are written with respect to this basis, and then the first $k-d$ rows of $A_\sigma$ are zero. Therefore, all the non-zero elements of the $A_\sigma$'s are located in the bottom-right $d\times d$ submatrices. So we get $\tr(A_i B_j)=\tr(A'_iB'_j)$, where $A'_i$, $B'_j$ are matrices obtained from $A$, $B$ by cutting off the first $k-d$ rows and columns.
\end{proof}

Now we compute the PSD rank of a relevant specific matrix.

\begin{ex}\label{ex22}
The PSD rank of the matrix
$$P(\alpha)=\begin{pmatrix}
\alpha&1&1\\
1&1&0\\
1&0&1
\end{pmatrix}$$
equals two whenever $\alpha\in[0,4]$.
\end{ex}

\begin{proof}
We get $\psd(P)\geqslant2$ by Lemma~\ref{ex21}. To prove a reversed inequality, we set $A_2=B_2=\left(\begin{smallmatrix}
1 & 0\\
0 & 0
\end{smallmatrix}\right)$, $A_3=B_3=\left(\begin{smallmatrix}
0 & 0\\
0 & 1
\end{smallmatrix}\right)$, $A_1=\left(\begin{smallmatrix}
1 & a\\
a & 1
\end{smallmatrix}\right)$, $B_1=\left(\begin{smallmatrix}
1 & b\\
b & 1
\end{smallmatrix}\right)$, where $a,b\in[-1,1]$ are such that $2ab+2=\alpha$. The $A_i$'s and $B_j$'s are PSD, and we can check that $\tr(A_iB_j)=P(i|j)$ for all $i,j$.
\end{proof}

The following is a key result of this section.

\begin{lem}\label{pr3}
Let $S\in\mathbb{R}^{n\times n}$, $b\in\mathbb{R}^{n\times 1}$, $c\in\mathbb{R}^{1\times n}$ be nonnegative matrices, and let $N$ be a positive integer. Assume that the PSD rank of the matrix
$$
G=\left(\begin{array}{ccc|c|cccc}
&&&&0&0\\
&{S}&&{b}&\vdots&\vdots\\
&&&&0&0\\\hline
&{c}&&{N}&{N}&{N}\\\hline
0&\ldots&0&{N}&{N}&{0}\\
0&\ldots&0&{N}&{0}&{N}
\end{array}\right)
$$
does not exceed $r+2$. Then there is a nonnegative number $x$ such that the matrix
$\mathcal{A}(x)=\left(\begin{array}{c|c}
S&{b}\\\hline
c&{x}
\end{array}\right)$
has PSD rank at most $r$.
\end{lem}

\begin{proof} We label the rows and columns of $G$ by $1,\ldots,n,\nu_1,\nu_2$ with respect to the order they appear in the definition of $G$. There are two tuples, $(A_\sigma)$ and $(B_\tau)$, of PSD matrices of order $r+2$ such that $\tr(A_\sigma B_\tau)=G(\sigma|\tau)$ for all $\sigma,\tau$. Using the Cholesky decomposition, we write
\begin{equation}\label{eq119}A_\sigma=\sum_{i} u_{\sigma i}u_{\sigma i}^\top,\,\,\,
B_\tau=\sum_{j} v_{\tau j}v_{\tau j}^\top,\end{equation}
where $i,j$ enumerate the summands in the decomposition. Here and in the rest of the proof, we assume that they run over $\{1,\ldots,r+2\}$. We get from~\eqref{eq119} that
\begin{equation}\label{eq120}\tr(A_\sigma B_\tau)=\sum_{i,j}\left(u_{\sigma i}^\top v_{\tau j}\right)^2.\end{equation}
Let us denote by $U$ the linear space spanned by the vectors in the decompositions of $A_{\nu_1}$ and $A_{\nu_2}$, that is, by the set $\cup_i\{u_{\nu_1i},u_{\nu_2i}\}$. Similarly, we denote by $V$ the linear space spanned by the vectors in the decompositions of $B_{\nu_1}$ and $B_{\nu_2}$. Example~\ref{ex21} and Observation~\ref{obs124} show that $\dim U\geqslant2$, $\dim V\geqslant2$, the equality~\eqref{eq120} shows that the vectors $u_{\sigma i}$ belong to the orthogonal complement $V^\bot$ if $\sigma\in\{1,\ldots,n-1\}$.

\textit{Step 1.} Since $\dim U+\dim U^\bot=r+2$, one has $\dim U^\bot\leqslant r$. Similarly, we have $\dim V^\bot\leqslant r$.

\textit{Step 2.} If $\dim V^\bot\leqslant r-1$, then the matrix obtained from $\mathcal{A}$ by removing the last row has PSD rank at most $r-1$ by Observation~\ref{obs124}, and we get from Observation~\ref{obs123} that $\psd\mathcal{A}(x)\leqslant r$ for all $x$. This would complete the proof of the lemma, so we can assume that $\dim V^\bot\geqslant r$, or, taking into account Step~1, that $\dim V^\bot=r$. Similarly, we get $\dim U^\bot=r$. Using Step~1 again, we get $\dim U=\dim V=2$.

\textit{Step 3.} Now assume that $\dim (U+V^\bot)<r+2$. In this case, Observation~\ref{obs124} shows that the matrix obtained from $G$ by removing the $n$th row has PSD rank at most $r+1$, so we get from Lemma~\ref{lemblock} that the matrix obtained from $\mathcal{A}$ by removing the last row has PSD rank at most $r-1$. As in Step~2, we conclude that $\psd\mathcal{A}(x)\leqslant r$ for all $x$, which would complete the proof of the lemma. So we can assume that $\dim (U+V^\bot)\geqslant r+2$, which implies that $\mathbb{R}^{r+2}$ is a direct sum of $U$ and $V^\bot$ after taking into account the result of Step~2. Similarly, we get that $\mathbb{R}^{r+2}$ is a direct sum of $V$ and $U^\bot$.

\textit{Step~4.} By the result of Step~3, we have $u_{ni}=\alpha_i+\beta_i$ with $\alpha_i\in U$, $\beta_i\in V^\bot$, and $v_{ni}=\alpha'_i+\beta'_i$ with $\alpha'_i\in V$ and $\beta'_i\in U^\bot$. We define $A'_n=\sum_i\beta_{n i}\beta_{n i}^{\top}$, $B'_n=\sum_i\beta_{n i}\beta_{n i}^{\prime\top}$, we replace the matrices $A_n$, $B_n$ by $A'_n$, $B'_n$ in the tuples $(A_\sigma)$, $(B_\tau)$, and we denote the newly obtained tuples by $(A'_\sigma)$, $(B'_\tau)$.

We note that, for all $p,q\in\{1,\ldots,n\}$, one has $\tr(A'_pB'_q)=\tr(A_pB_q)$ unless $p=q=n$. This means that the tuples $(A'_p)$, $(B'_q)$ provide a valid factorization for the matrix $\mathcal{A}(y)$ with some value of $y$. Since for all $p\in\{1,\ldots,n\}$ the rows of the matrices $A'_p$ belong to $V^\bot$, we apply Observation~\ref{obs124} and get $\psd(\mathcal{A}(y))\leqslant\dim V^\bot$. So we get $\psd(\mathcal{A}(y))\leqslant r$ by the result of Step~2.
\end{proof}

\section{A matrix completion problem}

Let us consider the set $\R\cup\{\ca\}$, where the element $\ca$ can be thought of as a `real number that is not yet specified'. A matrix $S$ with entries in $\R\cup\{\ca\}$ is called \textit{incomplete}, and any real matrix $C$ for which $C(i|j)=S(i|j)$ whenever $S(i|j)\in\R$ is called a \textit{completion} of $S$. It will be important for us to consider the following technical condition imposed on incomplete matrices.

\begin{defn}\label{defsqrt}
Consider the following property of an incomplete matrix $S$:

For any column index $k$, there are row indexes $i_1,i_2$ and column indexes $j_1,j_2$ such that the submatrix $S(i_1,i_2|k,j_1,j_2)$ equals $\left(\begin{smallmatrix}
 0& 1 & 0\\
 0& 0 & 1
\end{smallmatrix}\right)$.

If this holds for both $S$ and $S^\top$, we say that $S$ satisfies the \textit{sqrt condition}.
\end{defn}

This condition is named after the matrix invariant known as \textit{sqrt rank}, see~\cite{PSD} for details. A reader familiar with this notion can give an equivalent formulation of the lemma below: For any completion $C$ of a matrix satisfying sqrt condition, $\psd(C)\leqslant3$ implies $\operatorname{sqrt\,rank}(C)\leqslant3$.

\begin{lem}\label{lem34}
Let $S$ be an incomplete matrix satisfying the sqrt property. If a completion $C$ of $S$ satisfies $\psd(C)\leqslant3$, then there is a real matrix $Q$ such that $Q\circ Q=C$ and $\operatorname{rank}(Q)\leqslant3$.
\end{lem}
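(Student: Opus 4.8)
The plan is to start from a PSD factorization $C(i|j)=\tr(A_iB_j)$ with $3\times 3$ PSD matrices, write Cholesky decompositions $A_i=\sum_t u_{it}u_{it}^\top$ and $B_j=\sum_\tau v_{j\tau}v_{j\tau}^\top$, and aim to show that in fact each $A_i$ and each $B_j$ can be taken to be \emph{rank one}; that is, $A_i=a_ia_i^\top$ and $B_j=b_jb_j^\top$ for vectors $a_i,b_j\in\R^3$. Once this is achieved, $C(i|j)=(a_i^\top b_j)^2$, so setting $Q(i|j)=a_i^\top b_j$ gives $Q\circ Q=C$, and $Q$ is a product of a matrix with rows $a_i^\top$ and a matrix with columns $b_j$, hence $\rk(Q)\leqslant 3$. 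So the whole content of the lemma is the reduction to rank-one factors, and the sqrt condition is exactly the hypothesis that should force this.

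The key step will be to examine what the sqrt condition buys us column by column. Fix a column index $k$; by the sqrt condition there are rows $i_1,i_2$ and columns $j_1,j_2$ with $S(i_1,i_2\,|\,k,j_1,j_2)=\left(\begin{smallmatrix}0&1&0\\0&0&1\end{smallmatrix}\right)$, and these entries are genuine real entries of $C$ since they are specified. The relations $\tr(A_{i_1}B_k)=0$, $\tr(A_{i_2}B_k)=0$ together with the positive semidefiniteness force, via \eqref{eq120}, that every vector $u_{i_1 t}$ and every $u_{i_2 t}$ lies in the orthogonal complement of the span $V_k$ of the vectors $v_{k\tau}$ appearing in $B_k$; meanwhile $\tr(A_{i_1}B_{j_1})=1\neq0$ and $\tr(A_{i_2}B_{j_2})=1\neq0$ keep $A_{i_1},A_{i_2}$ from being zero. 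The plan is to combine this across all columns (and, dually, across all rows using the condition on $S^\top$) to show that the ambient space $\R^3$ decomposes so that the column space $V:=\operatorname{span}\{v_{j\tau}\}$ and the row space $U:=\operatorname{span}\{u_{it}\}$ are in a position where one of them has dimension at most one on the relevant side; iterating, or applying Observation \ref{obs124} to strip off dimensions, should let us replace each $A_i$ and $B_j$ by a rank-one matrix without changing any entry of $C$. This is the same flavour of argument as in the proof of Lemma \ref{pr3}: split each generating vector as a sum of a component in a subspace and a component in a complementary subspace, throw away the harmless component, and check that no prescribed trace changes.

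The main obstacle, and the place where the sqrt condition must be used in full strength rather than just once, is controlling the \emph{diagonal} (or ``flexible'') entries — the entries that came from $\ca$ in $S$ — while forcing the factors to rank one. Stripping a dimension off $B_j$ using the zero pattern in column $k$ only controls those $A_i$ that are tied to a genuine zero in that column; an entry $C(i|j)$ that is unconstrained could in principle require $A_i$ to have full rank $3$. The sqrt condition on $S^\top$ is what prevents this: every \emph{row} also contains a prescribed $\left(\begin{smallmatrix}0&1&0\\0&0&1\end{smallmatrix}\right)$-pattern, so every $A_i$ is likewise pinned down by genuine zeros in its row. The delicate part of the write-up will be to run the column reduction and the row reduction simultaneously (or in the right order) so that the two sets of constraints, each living in $\R^3$, are compatible and together cut the factor ranks down to one; I expect this to go through by a dimension count in $\R^3$ analogous to Steps 1–4 of Lemma \ref{pr3}, after which $Q(i|j)=a_i^\top b_j$ with $\rk Q\leqslant 3$ and $Q\circ Q=C$ follows immediately.
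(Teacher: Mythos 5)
Your high-level framing is right --- the whole content is forcing all the $A_i$ and $B_j$ to rank one, after which $Q(i|j)=a_i^\top b_j$ does the job --- and you correctly observe that the prescribed zeros from the sqrt condition force $A_{i_1},A_{i_2}$ to be supported on $V_k^\perp$. But your closing step is a genuine gap: you propose to run a column reduction and a row reduction ``simultaneously,'' strip dimensions via Observation~\ref{obs124}, and hope a dimension count in the spirit of Lemma~\ref{pr3} closes the argument. You don't actually carry this out, and it is not the mechanism that works here. In particular, your worry about the $\ca$-entries is a distraction: the proof never looks at them, and there is no need to couple the row and column constraints.

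The actual argument is a one-shot contradiction, and it uses a different consequence of the sqrt pattern than the one you extract. Suppose toward a contradiction that some factor, say $B_k$, has rank $\geq2$, so $B_k=\sum_\ell u_\ell u_\ell^\top$ with $u_1,u_2$ non-collinear; let $v$ span the (at most one-dimensional) orthogonal complement of $u_1,u_2$ in $\R^3$. For any row $t$ with $C(t|k)=0$, positivity forces every Cholesky vector of $A_t$ to lie in $\operatorname{span}(v)$, hence $A_t=\alpha_t\,vv^\top$. Consequently any two such rows of $C$ are proportional: $C(t|j)=\alpha_t\,v^\top B_j v$. But the sqrt condition hands you rows $i_1,i_2$ both zero in column $k$ whose restriction to columns $j_1,j_2$ is the $2\times2$ identity --- a non-proportional pair. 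Contradiction. By the symmetry built into the sqrt condition (it holds for $S^\top$ too), the same argument rules out any $A_i$ of rank $\geq2$. So all factors are rank $\leq1$ and you are done. The point you miss is that the $\bigl(\begin{smallmatrix}1&0\\0&1\end{smallmatrix}\bigr)$ block is used not merely to keep $A_{i_1},A_{i_2}$ nonzero, but to witness that rows $i_1,i_2$ cannot be collinear; that is the single fact that kills a rank-$\geq2$ factor outright, with no iteration or dimension-stripping needed.
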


\begin{proof}
Let $A_1,\ldots,A_n, B_1,\ldots,B_m$ be $3\times3$ PSD matrices satisfying $\tr(A_iB_j)=C(i|j)$ for all $i,j$. If $\rk(A_i)\leqslant1$ and $\rk(B_j)\leqslant1$, then we can write $A_i=a_ia_i^\top$ and $B_j=b_jb_j^\top$; in this case, the matrix $Q$ defined as $Q(i|j)=a_i^\top b_j$ satisfies the assumptions of the lemma.

So we can assume that either $\rk(A_i)>1$ for some $i$ or $\rk(B_k)>1$ for some $k$. Since the sqrt condition is invariant under transposition, we assume that $\rk(B_k)>1$ holds. We get $B_k=u_1u_1^\top+u_2u_2^\top+u_3u_3^\top$ with non-collinear $u_1,u_2$ (and possibly zero $u_3$). Let $v$ be a non-zero vector orthogonal to $u_1,u_2$; we get $A_t=\alpha_t vv^\top$ for all $t$ such that $C_{tk}=0$. In other words, any two rows of $C$ having zeros at $k$th positions are collinear, which contradicts the sqrt condition.
\end{proof}

%\section{Reducing a matrix completion problem to psd rank}

Now we need to introduce the matrix which is the key of our redution from ETR to PSD RANK. This is done in the separate definition for the ease of subsequent references.

\begin{defn}\label{defnM}
Let $K$ be a positive integer and $S$ an $n\times n$ incomplete matrix. We are going to assign an instance of Problem~\ref{prob11} to every pair $(S,K)$ as follows. We denote by $E(S)=\{e_1=(i_1j_1),\ldots,e_k=(i_kj_k)\}$ the set of all pairs $(i,j)$ such that $S(i|j)=\ca$. The matrix $\M=\M(S,K)$ will have $2k+n$ rows and columns indexed with $E^1\cup E^2\cup\nov$, where $E^1, E^2$ are copies of the set $E(S)$ and $\nov=\{1,\ldots,n\}$. We define $\M(i|j)=S(i|j)$ if $(i,j)\notin E$; if $e=(i,j)\in E$, then we define the submatrix $\M(i,e^1,e^2|j,e^1,e^2)$ as $KP(1)$, where $P(\alpha)$ is as in Example~\ref{ex22}. The entries of $\M$ that are not yet defined are set to equal $0$.
\end{defn}

\begin{lem}\label{lem41}
If $S$ admits a completion $C$ such that $\psd(C)\leqslant3$ and $\max_{i,j}|C(i|j)|\leqslant K$, then $\psd(\M(S,K))\leqslant2k+3$.
\end{lem}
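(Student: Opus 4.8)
The plan is to glue the given PSD factorization of $C$ together with rank‑two factorizations of suitably scaled copies of the matrix $P(\alpha)$ from Example~\ref{ex22}, along a direct sum decomposition $\R^{2k+3}=\R^{3}\oplus W_{1}\oplus\ldots\oplus W_{k}$, in which each $W_{s}$ is a copy of $\R^{2}$ attached to the $s$th missing entry $e_{s}=(i_{s},j_{s})$ of $S$. Since $\psd(C)\leqslant 3$, I would fix $3\times 3$ PSD matrices $X^{0}_{i}$ (for $i\in\nov$) and $Y^{0}_{j}$ (for $j\in\nov$) with $\tr(X^{0}_{i}Y^{0}_{j})=C(i|j)$. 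For each $s$ I set $\alpha_{s}=1-C(i_{s}|j_{s})/K$; the hypothesis $|C(i_{s}|j_{s})|\leqslant K$ forces $\alpha_{s}\in[0,2]\subseteq[0,4]$, so Example~\ref{ex22} (with one of the two output tuples scaled by $K$) yields $2\times 2$ PSD matrices $M^{s}_{1},M^{s}_{2},M^{s}_{3}$ and $N^{s}_{1},N^{s}_{2},N^{s}_{3}$, living in $W_{s}$, such that $\tr(M^{s}_{a}N^{s}_{b})=K\,P(\alpha_{s})(a|b)$ for all $a,b\in\{1,2,3\}$.

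Next I would assemble a factorization of $\M(S,K)$ that is block‑diagonal with respect to this decomposition. To a row index $i\in\nov$ I assign the PSD matrix $X_{i}$ with block $X^{0}_{i}$ on the $\R^{3}$‑summand, block $M^{s}_{1}$ on $W_{s}$ for every $s$ with $i_{s}=i$, and zero blocks on all remaining summands; to $e^{1}_{s}\in E^{1}$ the matrix with block $M^{s}_{2}$ on $W_{s}$ and zeros elsewhere; to $e^{2}_{s}\in E^{2}$ the matrix with block $M^{s}_{3}$ on $W_{s}$ and zeros elsewhere. The column matrices $Y_{j}$ (for $j\in\nov$), $Y_{e^{1}_{s}}$, $Y_{e^{2}_{s}}$ are defined in the same way, with the $N$'s in place of the $M$'s. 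All $2k+n$ of these matrices are PSD of order $2k+3$, being block‑diagonal with PSD blocks.

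Finally I would check $\tr(X_{\sigma}Y_{\tau})=\M(\sigma|\tau)$ for every row index $\sigma$ and column index $\tau$. Two of the constructed matrices overlap on the $\R^{3}$‑block only when $\sigma=X_{i}$, $\tau=Y_{j}$ with $i,j\in\nov$, and overlap on $W_{s}$ only when $\sigma\in\{i_{s},e^{1}_{s},e^{2}_{s}\}$ and $\tau\in\{j_{s},e^{1}_{s},e^{2}_{s}\}$; hence every other trace product vanishes, in agreement with the zero entries of $\M$. For $(i,j)\notin E$ one gets $\tr(X_{i}Y_{j})=\tr(X^{0}_{i}Y^{0}_{j})=C(i|j)=S(i|j)$. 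For each fixed $s$, the nine trace products among $X_{i_{s}},X_{e^{1}_{s}},X_{e^{2}_{s}}$ and $Y_{j_{s}},Y_{e^{1}_{s}},Y_{e^{2}_{s}}$ reproduce the block $KP(1)$: eight of them equal $K\,P(\alpha_{s})(a|b)$ with $(a,b)\neq(1,1)$, which coincides with $K\,P(1)(a|b)$, while the ninth, $\tr(X_{i_{s}}Y_{j_{s}})$, additionally picks up the summand $\tr(X^{0}_{i_{s}}Y^{0}_{j_{s}})=C(i_{s}|j_{s})$ from the $\R^{3}$‑block, giving $C(i_{s}|j_{s})+K\alpha_{s}=K$. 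I do not expect a genuine obstacle beyond keeping the index bookkeeping straight; the one point that must be right is that $P(\alpha)$ depends on $\alpha$ only in its $(1,1)$ entry, which is precisely why the choice $\alpha_{s}=1-C(i_{s}|j_{s})/K$ absorbs exactly the contribution $C(i_{s}|j_{s})$ of the $\R^{3}$‑block at position $(i_{s},j_{s})$ without disturbing any of the other eight constraints.
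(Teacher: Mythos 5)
Your proof is correct and is essentially the paper's argument made explicit: the paper writes $\M$ as the sum of the zero‑padded completion $C$ and $k$ disjointly supported copies of $K\,P(\alpha_s)$ and then invokes the subadditivity $\psd(A+B)\leqslant\psd(A)+\psd(B)$ (Lemma~\ref{lem25}) together with Example~\ref{ex22}, whereas you unwind that subadditivity into the corresponding block‑diagonal factorization $\R^{2k+3}=\R^{3}\oplus W_{1}\oplus\cdots\oplus W_{k}$ and verify all trace products directly. The key observation — that $P(\alpha)$ differs from $P(1)$ only in its $(1,1)$ entry, so $\alpha_{s}=1-C(i_{s}|j_{s})/K$ exactly absorbs the overlap with the $\R^{3}$ block — is the same point the paper's decomposition relies on.
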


\begin{proof}
By the definition of $\M$, the nonzero entries of $\M-C$ belong to $k$ disjoint submatrices of the form $KP(\alpha)$ with $\alpha\in[0,1]$. The PSD ranks of these submatrices equal $2$ by Example~\ref{ex22}, so the result follows from Lemma~\ref{lem25}.
\end{proof}

\begin{lem}\label{lem42}
Let $S$ be an incomplete matrix. If $\psd(\M(S,K))\leqslant2k+3$, then $S$ admits a completion $C$ such that $\psd(C)\leqslant3$. 
\end{lem}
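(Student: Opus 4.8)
The plan is to argue by induction on the number $k=|E(S)|$ of unspecified entries of $S$, removing one of the $k$ gadgets at each step with the help of Lemma~\ref{pr3}. If $k=0$ then $\M(S,K)=S$, so the hypothesis reads $\psd(S)\leqslant 3$ and $C=S$ is the desired completion. For the inductive step, fix a position $e_k=(i,j)\in E(S)$.

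The key observation will be that, after permuting the rows and (independently) the columns of $\M(S,K)$, this matrix becomes an instance of the matrix $G$ appearing in Lemma~\ref{pr3}, with $N=K$: one lets row $i$ play the role of the row of $G$ carrying $c$ and the distinguished corner entry $N$, lets column $j$ play the role of the column of $G$ carrying $b$, and lets the two new rows and columns indexed by $e_k^1,e_k^2$ play the roles of $\nu_1$ and $\nu_2$; every other row and column is swept into the blocks $S$, $b$, $c$ of Lemma~\ref{pr3}, on which no condition is imposed. Checking this is a direct inspection of Definition~\ref{defnM}: because $e_k^1\in E^1$, $e_k^2\in E^2$, and the two copies $E^1,E^2$ of $E(S)$ are disjoint from each other and from $\nov$, the index $e_k^1$ (and likewise $e_k^2$) is touched by no gadget other than that of $e_k$; hence row $e_k^1$ carries a $K$ in column $j$, a $K$ on the diagonal, and $0$ everywhere else, the same holds for $e_k^2$ and for the columns $e_k^1,e_k^2$, and the $3\times 3$ submatrix of $\M(S,K)$ on rows $\{i,e_k^1,e_k^2\}$ and columns $\{j,e_k^1,e_k^2\}$ equals $K\,P(1)=N\,P(1)$ --- exactly the pattern of $G$. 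Note that no constraint is placed on the rest of row $i$ or column $j$, which matters because these may contain further $K$'s coming from other gadgets lying in row $i$ or column $j$ of $S$; such entries merely become entries of the vectors $c$ and $b$, which are arbitrary nonnegative vectors. Taking $r:=2k+1$, the hypothesis $\psd(\M(S,K))\leqslant 2k+3=r+2$ together with Lemma~\ref{pr3} then yields a nonnegative number $x$ with $\psd(\mathcal{A}(x))\leqslant r=2k+1$.

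The final step is to identify $\mathcal{A}(x)$. By construction it is obtained from $\M(S,K)$ by deleting the rows and columns indexed by $e_k^1,e_k^2$ and replacing the $(i,j)$ entry (which equals $K$) by $x$. Deleting $e_k^1,e_k^2$ leaves the index set $\tilde E^1\cup\tilde E^2\cup\nov$ with $\tilde E=E(S)\setminus\{e_k\}$, leaves the $k-1$ gadgets of $e_1,\dots,e_{k-1}$ intact, and on the $\nov\times\nov$ block leaves the matrix $S$ with each $\ca$ replaced by $K$; after the replacement of the $(i,j)$ entry by $x$ one gets precisely $\M(\tilde S,K)$, where $\tilde S$ is $S$ with the entry at position $(i,j)$ now specified to equal $x$. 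Hence $\psd(\M(\tilde S,K))\leqslant 2(k-1)+3$ and $\tilde S$ has $k-1$ unspecified entries, so the inductive hypothesis provides a completion $C$ of $\tilde S$ with $\psd(C)\leqslant 3$; since every completion of $\tilde S$ is also a completion of $S$, the proof is complete. The only real work lies in this bookkeeping, and I expect the point needing the most care to be the verification in the previous paragraph that $\M(S,K)$ genuinely fits the template of Lemma~\ref{pr3} --- in particular that the rows and columns $e_k^1,e_k^2$ have exactly the required zero pattern --- and that peeling off one gadget returns a matrix of the same shape $\M(\cdot,K)$ with one unknown fewer.
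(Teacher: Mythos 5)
Your proof is correct and takes essentially the same approach as the paper: induction on $k$, applying Lemma~\ref{pr3} (after a row/column permutation, with $r=2k+1$ and $N=K$) to peel off the gadget attached to one unspecified entry, observing that the resulting matrix is again of the form $\M(\cdot,K)$ for an incomplete matrix with one fewer $\ca$-entry. The paper's proof states this more tersely; you supply the detailed verification that $\M(S,K)$ matches the template $G$ of Lemma~\ref{pr3} (in particular the zero patterns of rows and columns $e_k^1,e_k^2$), which is indeed the only point requiring care, and your bookkeeping there is accurate.
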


\begin{proof}
There is nothing to prove if $E$ is empty, so we assume $e=(i,j)\in E$ and proceed by the induction on $k$ (that is, on the cardinality of $E$). By Lemma~\ref{pr3}, there is a positive number $x_{ij}$ such that, if we remove from $\M$ the rows and columns with indexes $e^1, e^2$ and replace the $(i,j)$ entry with $x_{ij}$, we will get the matrix $\M'$ with PSD rank at most $2k+1$. Note that we have $\M'=\M(S',K)$, where $S'$ is the matrix obtained from $S$ by replacing the $(i,j)$ entry with $x_{ij}$. Since the number of $\ca$-entries of $S'$ is less than that in $S$, the result follows by induction.
\end{proof}

\section{Matrix completion problems and ETR}

Now we will work with the $\exists\R$-complete problem discussed in Theorem~\ref{lem14}. A polynomial $f(x_1,\ldots,x_n)$ as in this theorem can be written as a sum of monomials. % $P-Q$, where $p,q$ are sums of monomials taken with positive signs.
If $p=\pm x_{i_1}x_{i_2}\ldots x_{i_k}$ is such a monomial, then we define the set $\sigma(p)$ as $\{\pm1,\pm x_{i_1},\pm x_{i_1}x_{i_2},\ldots,\pm p\}$. If $f=p_1+\ldots+p_s$, then we define $$\sigma(f)=\sigma(p_1)\cup\ldots\cup\sigma(p_s)\cup\{0,\pm p_1,\pm(p_1+p_2),\ldots,\pm P\}.$$ Clearly, the construction of the set $\sigma=\sigma(f)$ can be done in time polynomial in the length of $f$.

We denote by $\H=\H(f)$ the set of those vectors in $\sigma^3$ that have one of the coordinates equal to $1$. We proceed with the definition of the three matrices, $\A=\A(f)$, $\B=\B(f)$, $\C=\C(f)$, which have their rows and columns indexed with the elements of $\H$.

\begin{defn}\label{defna}
Define $\A_1$ as the matrix whose rows are vectors in $\H$, and let $\A_2$ be the matrix whose columns are vectors in $\H$. We define the matrix $\A$ over $\mathbb{Z}[x_1,\ldots,x_n]$ as $\A_1\A_2$, that is, we set $\A(u|v)=(u\cdot v)^2$, where $u\cdot v$ is the dot product of $u$ and $v$ as vectors in $\sigma^3$. (In terms of conventional matrix multiplication, $u\cdot v$ stands for $u^\top v$.)
\end{defn}

\begin{defn}\label{defnb}
$\B$ is the matrix with entries in $\mathbb{Z}\cup\{\ca\}$, and we set $\B(u|v)=b$ if $\A(u|v)$ is identically equal to $b$, we also set $\B(u|v)=0$ in the case when $\A(u|v)$ is a multiple of $f$. In the remaining cases, we set $\B(u|v)=\ca$.
\end{defn}

\begin{defn}\label{defnc}
$\C$ is the matrix with entries in $\{0,\ast,\ca\}$. We set $\C(u|v)=0$ if $\B(u|v)=0$, we set $\C(u|v)=\ca$ if $\B(u|v)=\ca$, and also $\C(u|v)=\ast$ if $\B(u|v)$ is a nonzero number.
\end{defn}

As in the above discussion, the symbol $\ca$ can be thought of as a number that is not yet defined. In particular, $\B$ belongs to the class of \textit{incomplete matrices} discussed above. The new symbol, $\ast$, has a similar meaning as $\ca$ but denotes a number required to be non-zero. One can check that these matrices can be constructed in polynomial time. 

%\begin{observation}\label{obs70}
%$\B$ contains a unit $3\times3$ submatrix.
%\end{observation}

%\begin{proof}
%This submatrix lies on the intersection of rows and columns indexed by $(1,0,0)$, $(0,1,0)$, $(0,0,1)$.
%\end{proof}

\begin{observation}\label{obs71}
$\B$ satisfies the sqrt condition as in Definition~\ref{defsqrt}.
\end{observation}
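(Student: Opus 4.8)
The plan is to verify the column half of Definition~\ref{defsqrt} for $\B$ by exhibiting an explicit $2\times 3$ submatrix of the required shape, and then to obtain the row half for free. First I would record that $\B$ is symmetric: by Definition~\ref{defna} one has $\A(u|v)=(u\cdot v)^2=(v\cdot u)^2=\A(v|u)$, and since by Definition~\ref{defnb} the value $\B(u|v)$ is a function of the polynomial $\A(u|v)$ alone, $\B=\B^\top$. So once the ``for $S$'' part of the sqrt condition is established for $\B$, the ``for $S^\top$'' part is automatic. I would also isolate the only features of $\sigma=\sigma(f)$ that enter the argument, all immediate from its definition: $0\in\sigma$, $1\in\sigma$, and $\sigma=-\sigma$. (If $f\equiv 0$ then $\sigma=\{0\}$, $\H=\varnothing$, and the claim is vacuous, so one may assume $\deg f\geqslant1$; this also guarantees that the constant $1$ is not a multiple of $f$, so that a constant value of $\A(u|v)$ genuinely dictates the corresponding value of $\B$ through the first clause of Definition~\ref{defnb}.)

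Next, fix a column index $v=(v_1,v_2,v_3)\in\H$; by definition of $\H$, $v_a=1$ for some $a\in\{1,2,3\}$, and I let $b,c$ denote the other two indices. The key move is to choose two rows that annihilate $v$ yet still lie in $\H$, which forces their coordinate equal to $1$ to sit in positions $b$ and $c$. Concretely I would take the vectors of $\sigma^3$ whose $(a,b,c)$-coordinates are
$$p=(-v_b,1,0),\qquad q=(-v_c,0,1),\qquad r=(0,1,0),\qquad s=(0,0,1);$$
using $-v_b,-v_c\in\sigma$ and $0,1\in\sigma$, all four lie in $\H$, with $p\neq q$ and $v,r,s$ pairwise distinct. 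A one-line dot-product computation gives $p\cdot v=-v_b+v_b=0$, $q\cdot v=-v_c+v_c=0$, $p\cdot r=1$, $p\cdot s=0$, $q\cdot r=0$, $q\cdot s=1$, so the polynomials $\A(p|v),\A(q|v),\A(p|r),\A(p|s),\A(q|r),\A(q|s)$ are the constants $0,0,1,0,0,1$, whence by Definition~\ref{defnb} the corresponding entries of $\B$ are $0,0,1,0,0,1$. Thus $\B(p,q|v,r,s)=\left(\begin{smallmatrix}0&1&0\\0&0&1\end{smallmatrix}\right)$, exactly the submatrix demanded by Definition~\ref{defsqrt} for the column $v$.

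Since $v\in\H$ was arbitrary, this establishes the column property for $\B$, and by the symmetry $\B=\B^\top$ noted above it establishes the sqrt condition. The whole argument is short bookkeeping; the only place that takes a moment's thought is the choice of the rows $p,q$ — one must make them orthogonal to $v$ (so the first column of the $2\times 3$ pattern is zero) while keeping them inside $\H$, and then observe that the near-standard-basis columns $r,s$ complete the pattern automatically. I do not expect any genuine obstacle beyond getting these four vectors right.
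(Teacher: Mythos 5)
Your proof is correct and follows essentially the same approach as the paper: you choose the same row vectors $(-v_b,1,0)$, $(-v_c,0,1)$ and column vectors $(0,1,0)$, $(0,0,1)$ to produce the required $\left(\begin{smallmatrix}0&1&0\\0&0&1\end{smallmatrix}\right)$ pattern, and invoke the symmetry $\B=\B^\top$ for the transpose half. The extra remarks you add (checking $\sigma=-\sigma$, distinctness of indices, the degenerate case $f\equiv0$) are fine bookkeeping that the paper leaves implicit.
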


\begin{proof}
Let $j\in\sigma^3$ be a column index; by the definition, one of the coordinates of $j$ equals $1$. We will consider the case when the first coordinate of $j$ equals one, and all the other cases can be considered symmetrically. So we have $j=(1,f,g)$; we define $i_1=(-f,1,0)$, $i_2=(-g,0,1)$, $j_1=(0,1,0)$, $j_2=(0,0,1)$, and we get $\B(i_1,i_2|j,j_1,j_2)=\left(\begin{smallmatrix}
 0& 1 & 0\\
 0& 0 & 1
\end{smallmatrix}\right)$. Since $\B=\B^\top$, the result follows.
\end{proof}

\begin{lem}\label{obs72}
If $f(\xi_1,\ldots,\xi_n)=0$ for some $\xi_i\in\R$, then there is a completion $B'$ of $\B$ such that $\psd(B')\leqslant3$ and $|B'(u|v)|\leqslant9\left(\operatorname{length} f\right)^4$.
\end{lem}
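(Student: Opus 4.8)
The plan is to produce $B'$ by the substitution $x_i=\xi_i$, since the matrix $\A$ was built precisely so that this substitution yields a rank-one PSD factorization. Write $u(\xi)=(u_1(\xi),u_2(\xi),u_3(\xi))\in\R^3$ for the vector obtained by evaluating the three coordinate polynomials of a vector $u\in\H$ at the point $\xi=(\xi_1,\ldots,\xi_n)$, and set $B'(u|v)=\big(u(\xi)\cdot v(\xi)\big)^2$; in other words, $B'$ is $\A$ with $x$ replaced by $\xi$. This $B'$ is automatically nonnegative, so the only things to verify are that $B'$ completes $\B$, that $\psd(B')\leqslant3$, and that the entries of $B'$ obey the stated bound.

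First I would check the completion property. By Definition~\ref{defnb}, an entry $\B(u|v)$ is a real number only when $\A(u|v)$ is the constant polynomial $\B(u|v)$, in which case $B'(u|v)=\A(u|v)(\xi)$ equals that constant; or when $\A(u|v)$ is a multiple of $f$ and $\B(u|v)=0$, in which case $B'(u|v)=\A(u|v)(\xi)=0$ because $f(\xi)=0$. Either way $B'(u|v)=\B(u|v)$, so $B'$ is a completion of $\B$. The bound on $\psd$ is then immediate: the matrices $A_u:=u(\xi)\,u(\xi)^{\top}$, for $u\in\H$, are $3\times3$ and PSD, and
$$\tr(A_u A_v)=\tr\big(u(\xi)\,u(\xi)^{\top}v(\xi)\,v(\xi)^{\top}\big)=\big(u(\xi)\cdot v(\xi)\big)^2=B'(u|v)$$
for all $u,v\in\H$, so using the single family $(A_u)_{u\in\H}$ both for the row factors and for the column factors gives a PSD factorization of $B'$ of size $3$.

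The step needing the most care — and the only place where the cube restriction of Theorem~\ref{lem14} is used rather than just Theorem~\ref{lem12} — is the entry bound. Since $f$ is as in Theorem~\ref{lem14}, all of its roots lie in $[-1,1]^n$, so $|\xi_i|\leqslant1$ for all $i$. Let $s$ denote the number of monomials of $f$, so that $s\leqslant\operatorname{length} f$. By construction each element of $\sigma(f)$ is either $0$, or $\pm$ a product of some of the variables $x_i$, or $\pm$ a sum of at most $s$ monomials; evaluated at $\xi$, such an expression has absolute value at most $1$ in the first two cases and at most $s$ in the last, hence at most $s$ in all cases. Therefore every coordinate of $u(\xi)$ and of $v(\xi)$ has absolute value at most $s$, and
$$|B'(u|v)|=\big(u(\xi)\cdot v(\xi)\big)^2\leqslant(3s^2)^2=9s^4\leqslant 9\left(\operatorname{length} f\right)^4.$$
In short, the only subtlety is this entry estimate, which rests on the cube restriction; everything else is a direct unwinding of the definitions of $\sigma(f)$, $\A$, and $\B$.
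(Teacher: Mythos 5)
Your proof is correct and takes essentially the same approach as the paper: substitute $\xi$ into the symbolic matrix to get a rank-$3$ factorization through $\R^3$, and use the cube restriction from Theorem~\ref{lem14} to bound the evaluated elements of $\sigma$ by the number of monomials. The paper invokes Lemma~\ref{lem24} where you write out the rank-one PSD factorization explicitly, but that is precisely how Lemma~\ref{lem24} is proved, so the two arguments coincide; you also spell out the completion check that the paper leaves implicit, which is a welcome clarification rather than a different route.
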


\begin{proof}
We define $B'$ as the entrywise square of the matrix $A'=\A(\xi_1,\ldots,\xi_n)$ obtained from $\A$ by substituting the variables $x_i$ with numbers $\xi_i$. By the definition, the matrix $A'$ is a product of the $|\H|\times 3$ matrix $\A_1(\xi_1,\ldots,\xi_n)$ and the $3\times |\H|$ matrix $\A_2(\xi_1,\ldots,\xi_n)$. We get $\operatorname{rank}(A')\leqslant3$, and Lemma~\ref{lem24} implies $\psd(B')\leqslant3$.

Recall that $f$ satisfies the assumption of Theorem~\ref{lem14}, so that $\xi_i\in[-1,1]$. In particular, the value of no monomial in the $\xi_i$'s can exceed one. We see that the absolute values of the entries of $\A_1(\xi_1,\ldots,\xi_n)$ and $\A_2(\xi_1,\ldots,\xi_n)$ cannot exceed $\left(\operatorname{length} f\right)$, which implies the desired bound on the entries of $B'$.
\end{proof}

Now we are going to prove Observation~\ref{obs72} in the converse direction. That is, we want to show that existence of a completion of $\B$ with PSD rank three implies that $f=0$ is satisfiable. As we will see later, the following is a stronger statement. We say that a matrix $D$ is a completion of the matrix $\C$ as in Definition~\ref{defnc} if $\C(u|v)=0$ implies $D(u|v)=0$ and $\C(u|v)=\ast$ implies $D(u|v)\neq0$.

\begin{lem}\label{lem73}
If the matrix $\C(f)$ admits a completion $D$ such that $\operatorname{rank}(D)\leqslant 3$, then the formula $f=0$ is satisfiable.
\end{lem}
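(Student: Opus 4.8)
The plan is to show that a rank-$\le 3$ completion $D$ of $\C(f)$ must essentially arise by substituting a real point into the index vectors of $\H$, and that this point has to be a zero of $f$. Fix a factorization $D=XY$ with $X$ having three columns and $Y$ three rows, and write $x_u\in\R^3$ for the row of $X$ indexed by $u\in\H$ and $y_v\in\R^3$ for the column of $Y$ indexed by $v$, so $D(u|v)=x_u^{\top}y_v$. The standard basis vectors $\epsilon_1=(1,0,0),\epsilon_2,\epsilon_3$ lie in $\H$ and satisfy $\A(\epsilon_i|\epsilon_j)=(\epsilon_i\cdot\epsilon_j)^2=\delta_{ij}$, hence $\C(\epsilon_i|\epsilon_i)=\ast$ and $\C(\epsilon_i|\epsilon_j)=0$ for $i\ne j$; so $D(\epsilon_1,\epsilon_2,\epsilon_3\,|\,\epsilon_1,\epsilon_2,\epsilon_3)$ is a nonsingular diagonal matrix, the families $(x_{\epsilon_i})$ and $(y_{\epsilon_j})$ are bases of $\R^3$, and after replacing $X,Y$ by $XG,G^{-1}Y$ we may assume $x_{\epsilon_i}$ is the $i$-th standard basis vector; then $y_{\epsilon_j}=d_j\cdot(\text{$j$-th standard basis vector})$ with $d_j:=D(\epsilon_j|\epsilon_j)\ne0$. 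Consequently $(x_u)_i=D(u|\epsilon_i)/d_i$, and since $\A(u|\epsilon_i)=u_i^2$, the $i$-th coordinate of $x_u$ vanishes whenever $u_i$ is $0$ or a multiple of $f$ and is nonzero whenever $u_i=\pm1$, and symmetrically for the $y_v$.

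Next I extract a candidate point. Replacing $f$ by $f+\sum_i(x_i-x_i)$ changes neither $f$ nor its solvability but forces every variable $x_i$ to lie in $\sigma$; assume this done. For $p\in\sigma$ the vector $x_{(1,p,0)}$ has nonzero first and zero third coordinate, so it is a scalar multiple of $(1,\phi(p),0)$ for a well-defined $\phi(p)\in\R$. Using the pairs $(u,v)\in\H^2$ whose polynomial inner product $u\cdot v$ is identically a constant — concretely $\bigl((1,p,q),(-p,1,0)\bigr)$, $\bigl((1,p,q),(-q,0,1)\bigr)$, $\bigl((1,p,p),(0,1,-1)\bigr)$, their transposes, and the rows/columns $\epsilon_i$ — one shows, bootstrapping between rows and columns, that the analogous ``slot functions'' attached to all the other families of index vectors carrying a single $1$ coincide with $\phi$, that $\phi(0)=0$, $\phi(1)=1$, $\phi(-q)=-\phi(q)$, and that $x_u$ is proportional to $\bigl(\phi(u_1),\phi(u_2),\phi(u_3)\bigr)$ for every $u\in\H$, and likewise $y_v\propto\bigl(\phi(v_1),\phi(v_2),\phi(v_3)\bigr)$. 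Set $\xi_i:=\phi(x_i)$.

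Finally I feed in the remaining structure of $\C$ through the pairs with $u\cdot v\equiv0$. For a prefix-extension step $g\mapsto gx_i$ occurring in the construction of $\sigma$, the pair $\bigl((1,g,-gx_i),(0,x_i,1)\bigr)\in\H^2$ has $u\cdot v=gx_i-gx_i=0$, so $D((1,g,-gx_i)|(0,x_i,1))=0$; substituting the proportionalities above gives $\phi(gx_i)=\phi(g)\phi(x_i)$. For a partial-sum step $g\mapsto g+p_{l+1}$, the pair $\bigl((1,g,p_{l+1}),(-(g+p_{l+1}),1,1)\bigr)$ has inner product $0$ and yields $\phi(g+p_{l+1})=\phi(g)+\phi(p_{l+1})$. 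Induction along the chains defining $\sigma$ now gives $\phi(p)=p(\xi)$ for all $p\in\sigma$, in particular $\phi(f)=f(\xi)$. Since $\A((1,f,0)|\epsilon_2)=f^2$ is a multiple of $f$, we have $\C((1,f,0)|\epsilon_2)=0$, so $0=D((1,f,0)|\epsilon_2)=d_2(x_{(1,f,0)})_2=d_2\,(x_{(1,f,0)})_1\,\phi(f)$ with $d_2\ne0$ and $(x_{(1,f,0)})_1\ne0$; hence $\phi(f)=0$, i.e.\ $f(\xi)=0$.

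I expect the main obstacle to be the middle step: showing that $\phi$ is a single well-defined function, consistent across all the index-vector slots, correctly normalized, and compatible with the claimed proportionality $x_u\propto(\phi(u_1),\phi(u_2),\phi(u_3))$. This is a delicate bookkeeping task — for each identity to be verified one must exhibit an explicit pair of index vectors all of whose coordinates lie in $\sigma$ and whose polynomial inner product realizes exactly that identity, and then follow the nonzero normalizing scalars and the signs through the computation; the harmless replacement $f\rightsquigarrow f+\sum_i(x_i-x_i)$ is precisely what makes the multiplicativity witnesses $\bigl((1,g,-gx_i),(0,x_i,1)\bigr)$ available, and the degenerate situation $\phi(g)=0$ for a prefix $g$ is absorbed automatically by multiplicativity. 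Once this is in place the remaining steps are routine.
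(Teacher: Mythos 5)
Your proof is correct and takes essentially the same route as the paper's: the paper's Steps~1--7 correspond to your normalization via standard-basis indices, its ``row-good/column-good'' predicate is your proportionality claim $x_u\propto(\phi(u_1),\phi(u_2),\phi(u_3))$, and its Steps~10--12 are your bootstrapping via constant-inner-product pairs together with the additivity/multiplicativity witnesses. One small point in your favor: you explicitly patch the (unstated) assumption that every variable $x_i$ lies in $\sigma(f)$ via the replacement $f\rightsquigarrow f+\sum_i(x_i-x_i)$, which the paper leaves implicit even though it uses the vectors $l_{(1,0,x_i)}$.
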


\begin{proof}
\textit{Step 1.} The completion $D$ satisfying $\operatorname{rank}(D)\leqslant 3$ can be written as the product of an $|\H|\times3$ matrix and a $3\times|\H|$ matrix. In other words, there are families $(p_u)$, $(l_v)$ of vectors in $\mathbb{R}^3$ such that \begin{equation}\label{eqpr1}p_u\cdot l_v\neq0 \mbox{ $ $ if $ $}\C(u|v)=* \mbox{ $ $ and $ $} p_u\cdot l_v=0 \mbox{ $ $ if $ $} \C(u|v)=0.\end{equation} (The indexes $u,v$ run over the set $\H$.) Let us make the two easy observations, denoted Step~2 and Step~3 for the ease of reference.

\textit{Step 2.} The properties~\eqref{eqpr1} will not be invalidated under the transformation $(p_u, l_v)\to(B^\top p_u, B^{-1} l_v)$, where $B$ is a non-singular $3\times 3$ matrix.

\textit{Step 3.} The properties~\eqref{eqpr1} will not be invalidated under the transformation $(p_u, l_v)\to(\lambda_u p_u, \mu_v l_v)$, where $\lambda_u,\mu_v$ are scalars.

\textit{Step 4.} Using Step~2, we can assume without loss of generality that $p_{(1,0,0)}=(1,0,0)$, $p_{(0,1,0)}=(0,1,0)$, $p_{(0,0,1)}=(0,0,1)$, and then the conditions $p_{(1,0,0)}\cdot l_{(1,0,0)}\neq0$, $p_{(0,1,0)}\cdot l_{(1,0,0)}=0$, $p_{(0,0,1)}\cdot l_{(1,0,0)}=0$ imply that $l_{(1,0,0)}=(\lambda_1,0,0)$ with $\lambda_1\neq0$. Similarly, we get $l_{(0,1,0)}=(0,\lambda_2,0)$, $l_{(0,0,1)}=(0,0,\lambda_3)$ with non-zero $\lambda_2,\lambda_3$.

\textit{Step 5.} Further, we get from the conditions $p_{(1,1,1)}\cdot l_{(1,0,0)}\neq0$, $p_{(1,1,1)}\cdot l_{(0,1,0)}\neq0$, $p_{(1,1,1)}\cdot l_{(0,0,1)}\neq0$ that $p_{(1,1,1)}=(a,b,c)$ with $0\notin\{a,b,c\}$. Applying Step~2 again, we can assume that $p_{(1,1,1)}=(1,1,1)$.

\textit{Step 6.} Applying the transformations as in Steps~2 and~3 to vectors in Steps~4 and~5, we can assume without loss of generality that $p_{(1,0,0)}=l_{(1,0,0)}=(1,0,0)$, $p_{(0,1,0)}=l_{(0,1,0)}=(0,1,0)$, $p_{(0,0,1)}=l_{(0,0,1)}=(0,0,1)$, $p_{(1,1,1)}=(1,1,1)$.

\textit{Step 7.} The conditions $p_{(1,0,0)}\cdot l_{(1,0,x_i)}\neq0$ and $p_{(0,1,0)}\cdot l_{(1,0,x_i)}=0$ show that $l_{(1,0,x_i)}=(a,0,d)$ with non-zero $a$. Using Step~3, we can assume that $l_{(1,0,x_i)}=(1,0,y_i)$ for any variable $x_i$. In what follows, $y_i$ denotes the third coordinate of the vector $l_{(1,0,x_i)}$, and we write $x=(x_1,\ldots,x_n)$, $y=(y_1,\ldots,y_n)$.

\textit{Step~8.} The strategy of the rest of the proof is to check that $f(y)=0$. The use of Step~3 allows us to think of the vectors $p_u$, $l_v$ as those defined up to a scalar multiplication. We say that the label $u=(a(x),b(x),c(x))$ is \textit{row-good} (or \textit{column-good}) if the vector $p_u$ (or $l_u$, respectively) is collinear to $(a(y),b(y),c(y))$.

\textit{Step~9.} The considerations of Steps~6,~7 show that the vectors $(1,0,0)$, $(0,1,0)$, $(0,0,1)$, $(1,1,1)$ are row-good, and the vectors $(1,0,0)$, $(0,1,0)$, $(0,0,1)$, $(1,0,x_i)$ are column-good. Similarly, $l_{(0,-1,1)}$ is orthogonal to the row-good vectors $(1,0,0)$ and $(1,1,1)$, so that $(0,-1,1)$ is column-good. %Since the vector $p_{(0,1,1)}$ is orthogonal to column-good vectors $(1,0,0)$ and $(0,-1,1)$, the vector $(0,1,1)$ is row-good.

\textit{Step~10.} Now assume that a vector $(g,0,h)$ is column-good. The vector $p_{(-h,g,g)}$ is then orthogonal to column-good vectors $(g,0,h)$ and $(0,-1,1)$, so that $(-h,g,g)$ is a row-good vector. Now the vector $(g,h,0)$ is column-good because $l_{(g,h,0)}$ is orthogonal to row-good vectors $(0,0,1)$ and $(-h,g,g)$.

\textit{Step~11.} If $(g,0,h)$ is column-good, the vector $(-h,0,g)$ is row-good because it is orthogonal to column-good vectors $(g,0,h)$ and $(0,1,0)$. The symmetry and Step~10 imply that, in the case when $(g,0,h)$ is column-good, any permutation of $(g,h,0)$ is column-good and any permutation of $(-h,0,g)$ is row-good.

\textit{Step~12.} Let us now assume that $(1,0,\alpha)$, $(1,0,\beta)$ are column-good.

\textit{Step~12.1.} Assume $\alpha+\beta\in\sigma$. We see that $(-1,1,\alpha)$ is column-good because it is orthogonal to vectors $(1,1,0)$ and $(0,-\alpha,1)$, which are row-good by Steps~9 and~11. Now we see that $(-\beta,-\alpha-\beta,1)$ is row-good because it is orthogonal to the column-good vectors $(-1,1,\alpha)$ and $(1,0,\beta)$. Finally, the vector $(0,1,\alpha+\beta)$ is column-good because it is orthogonal to the row-good vectors $(-\beta,-\alpha-\beta,1)$ and $(1,0,0)$. The vector $(1,0,\alpha+\beta)$ is column-good by Step~11.

\textit{Step~12.2.} Assume $\alpha-\beta\in\sigma$. We see that $(-1,-1,\alpha)$ is row-good because it is orthogonal to the vectors $(1,-1,0)$ and $(0,\alpha,1)$, which are column-good by Steps~9 and~11. Now we see that $(\beta,\alpha-\beta,1)$ is column-good because it is orthogonal to the row-good vectors $(-1,-1,\alpha)$ and $(1,0,-\beta)$. The vector $(0,1,\beta-\alpha)$ is row-good because it is orthogonal to the column-good vectors $(\beta,\alpha-\beta,1)$ and $(1,0,0)$. Finally, we see that $(0,\alpha-\beta,1)$ is a column-good vector since it is orthogonal to the row-good vectors $(1,0,\beta-\alpha)$ and $(0,1,0)$. Step~11 shows that $(1,0,\alpha-\beta)$ is column-good as well.

\textit{Step~12.3.} Assume $\alpha\beta\in\sigma$. The vector $(\alpha\beta,1,\alpha)$ is column-good because it is orthogonal to the vectors $(0,-\alpha,1)$ and $(1,0,-\beta)$, which are row-good by Step~10. The vector $(1,-\alpha\beta,0)$ is row-good because it is orthogonal to the column-good vectors $(0,0,1)$ and $(\alpha\beta,1,\alpha)$. Finally, we see that $(\alpha\beta,1,0)$ is a column-good vector because it is orthogonal to row-good vectors $(1,-\alpha\beta,0)$ and $(0,0,1)$. Step~10 implies that $(1,0,\alpha\beta)$ is column-good as well.

\textit{Step~13.} The results of Step~12 show that the vector $(1,0,s)$ is column-good for all $s\in\sigma$. We get that $p_{(0,0,1)}\cdot l_{(1,0,f)}=0$ because $\C((0,0,1)|(1,0,f))=0$, which means that $(0,0,1)\cdot(1,0,f(y))=0$ or $f(y)=0$. The proof is complete. 
\end{proof}

We conclude the section with the desired proof of the statement converse to Observation~\ref{obs72}.

\begin{lem}\label{lem74}
If the matrix $\B(f)$ admits a completion $B'$ such that $\psd(B')\leqslant 3$, then the formula $f=0$ is satisfiable.
\end{lem}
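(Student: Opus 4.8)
The plan is to obtain the conclusion by chaining together the results already proved in this section and in Section~5. First I would use Observation~\ref{obs71}, which guarantees that the incomplete matrix $\B$ satisfies the sqrt condition, so that Lemma~\ref{lem34} is applicable to $\B$. Applying that lemma to the given completion $B'$ of $\B$ with $\psd(B')\leqslant 3$, I get a real matrix $Q$ with $Q\circ Q=B'$ and $\rk(Q)\leqslant 3$.

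The next step is to check that this $Q$ is a completion of the matrix $\C=\C(f)$ in the sense fixed just before Lemma~\ref{lem73}; this is a routine case analysis over the three kinds of entries of $\C$ in Definition~\ref{defnc}. If $\C(u|v)=0$, then $\B(u|v)=0$, hence $B'(u|v)=0$ because $B'$ completes $\B$, hence $Q(u|v)^2=0$ and $Q(u|v)=0$. If $\C(u|v)=\ast$, then $\B(u|v)$ is some nonzero number $b$, so $B'(u|v)=b$ and $Q(u|v)^2=b\neq 0$, giving $Q(u|v)\neq 0$; the entries with $\C(u|v)=\ca$ impose no constraint. Thus $Q$ is a completion of $\C$ with $\rk(Q)\leqslant 3$, and Lemma~\ref{lem73} then immediately yields that $f=0$ is satisfiable, which is exactly the claim.

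Since every ingredient is already available, there is no genuine obstacle; the only point that needs a moment's care is the bookkeeping in the middle step, namely that an entrywise square root of a completion of $\B$ really is a completion of $\C$ — i.e.\ that the $\ast$-entries of $\C$, which by construction correspond precisely to the nonzero numerical entries of $\B$, remain nonzero after taking an entrywise square root, while the $0$-entries are preserved.
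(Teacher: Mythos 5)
Your proposal is correct and follows exactly the same chain of results the paper uses: Observation~\ref{obs71} $\Rightarrow$ sqrt condition $\Rightarrow$ Lemma~\ref{lem34} $\Rightarrow$ entrywise square root $Q$ of rank at most three $\Rightarrow$ $Q$ completes $\C$ $\Rightarrow$ Lemma~\ref{lem73}. The only difference is that you spell out the routine case check that $Q$ is a completion of $\C$, which the paper states without elaboration.
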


\begin{proof}
Observation~\ref{obs71} shows that $\B$ possesses the sqrt property, so Lemma~\ref{lem34} is applicable. Since $\B$ admits a completion $B'$ with $\psd(B')\leqslant 3$, we see that there is a matrix $C'$ with rank at most three such that $B'=C'\circ C'$. We see that $C'$ is a rank-three completion of $\C$, so the formula $f=0$ is satisfiable by Lemma~\ref{lem73}.
\end{proof}

\section{The main result}

Let us finalize the proof of the main result. Theorem~\ref{thr11} follows directly from Remark~\ref{remr11} and the theorem below.

\begin{thr}\label{thrm}
There is a polynomial-time reduction from ETR to PSD rank.
\end{thr}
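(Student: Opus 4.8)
The plan is to string together every reduction built up so far into a single chain. Starting from an arbitrary instance $\Phi$ of \texttt{ETR}, I would first apply Theorem~\ref{lem14} to obtain, in polynomial time, a polynomial $f(x_1,\ldots,x_n)\in\mathbb{Z}[x_1,\ldots,x_n]$ written as a sum of monomials such that $f=0$ is satisfiable if and only if $\Phi$ is, and such that $f$ has no zero outside the cube $[-1,1]^n$. From this $f$ I would construct the incomplete matrix $\B=\B(f)$ of Definition~\ref{defnb}; as remarked after Definition~\ref{defnc}, this can be done in polynomial time, and the rows and columns of $\B$ are indexed by the set $\H(f)$, whose cardinality is polynomial in the length of $f$.

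Next I would set $K=9\left(\operatorname{length} f\right)^4$, which is a positive integer, and form the matrix $\M=\M(\B,K)$ of Definition~\ref{defnM}; writing $k$ for the number of $\ca$-entries of $\B$, the matrix $\M$ has $2k+|\H(f)|$ rows and columns and is again produced in polynomial time. The reduction I propose is then $\Phi\mapsto(\M,\,2k+3)$, and the claim to be verified is that $\Phi$ is satisfiable if and only if $\psd(\M)\leqslant 2k+3$.

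The forward implication is handled by Lemmas~\ref{obs72} and~\ref{lem41}: if $\Phi$ is satisfiable then $f=0$ has a real solution, so Lemma~\ref{obs72} yields a completion $B'$ of $\B$ with $\psd(B')\leqslant3$ and $|B'(u|v)|\leqslant 9\left(\operatorname{length} f\right)^4=K$, whence Lemma~\ref{lem41}, applied with exactly this $K$, gives $\psd(\M)=\psd(\M(\B,K))\leqslant 2k+3$. The converse implication is handled by Lemmas~\ref{lem42} and~\ref{lem74}: if $\psd(\M)\leqslant 2k+3$, then Lemma~\ref{lem42} produces a completion $C$ of $\B$ with $\psd(C)\leqslant3$, and then Lemma~\ref{lem74} tells us that $f=0$ is satisfiable, hence so is $\Phi$. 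Combining the two directions shows that $\Phi\mapsto(\M,2k+3)$ is a correct polynomial-time reduction, and together with Remark~\ref{remr11} this gives Theorem~\ref{thr11}.

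The real difficulty of the argument is not in this section but in the lemmas it invokes: the structural analysis of the matrix $G$ in Lemma~\ref{pr3} that powers the inductive step of Lemma~\ref{lem42}, and the lengthy ``row-good/column-good'' orthogonality propagation in Lemma~\ref{lem73} that underlies Lemma~\ref{lem74}. What is left for the proof of Theorem~\ref{thrm} itself is essentially bookkeeping — confirming that each construction ($f$ from $\Phi$ via Theorem~\ref{lem14}, then $\B$ from $f$, then $\M$ from $(\B,K)$) runs in polynomial time and that the numerical constant $K$ is chosen to match the entrywise bound in Lemma~\ref{obs72} so that Lemma~\ref{lem41} applies verbatim; note that the hypothesis $\max_{i,j}|C(i|j)|\leqslant K$ of Lemma~\ref{lem41} holds for \emph{all} entries of the completion $B'$, including the fixed integer entries of $\B$, precisely because $B'$ is an honest completion of $\B$ and Lemma~\ref{obs72} bounds every entry of $B'$ by $K$.
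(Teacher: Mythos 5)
Your proposal is correct and follows the paper's proof essentially verbatim: the same chain Theorem~\ref{lem14} $\to$ $\B(f)$ $\to$ $\M(\B,K)$ with $K=9(\operatorname{length}f)^4$, the same target $(\M,2k+3)$, and the same use of Lemmas~\ref{obs72}/\ref{lem41} for the forward direction and Lemmas~\ref{lem42}/\ref{lem74} for the converse. The only difference is expository (you phrase the reduction as starting from a general ETR instance $\Phi$ rather than directly from the restricted instance $f$), which is harmless.
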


\begin{proof}
The problem in Theorem~\ref{lem14} is $\exists\R$-complete, so we can construct a reduction from it. For an instance $f$ of this problem, we construct the matrix $\B=\B(f)$ as in Definition~\ref{defnb}, we denote by $k$ the number of $\ca$-entries of $\B$, and we set $K=9\left(\operatorname{length} f\right)^4$. We construct the matrix $\M=\M(\B,K)$ as in Section~6, and we claim that
$\rho: f\to\left(\M,2k+3\right)$ is a desired reduction. As we noted in the above discussion, the matrix $\B$ can be constructed in time polynomial in the length of $f$; the matrix $\M$ can be constructed in time polynomial in the length of $(\B,K)$, so the function $\rho$ can be computed in polynomial time.

Now let $f$ be a yes-instance. By Lemma~\ref{obs72}, there is a completion $B'$ of $\B$ such that $\psd(B')\leqslant3$ and $|B'(u|v)|\leqslant K$. Lemma~\ref{lem41} implies $\psd(\M)\leqslant2k+3$, that is, $\rho(f)$ is a yes-instance of PSD RANK.

Finally, let $f$ be a no-instance. By Lemma~\ref{lem74}, $\B$ admits no completion $B'$ such that $\psd(B')\leqslant3$. Lemma~\ref{lem42} implies $\psd(\M)>2k+3$, so that $\rho(f)$ is a no-instance of PSD RANK.
\end{proof}

The technique developed in this paper allows us to prove another interesting result. For any subfield $F\subset\R$, we can define a similar PSD rank function but with a requirement that the entries of matrices in the factorization belong to $F$. We consider a polynomial $f\in\mathbb{Z}[x]$ that is irreducible over $\mathbb{Q}$ but has a root $r$ in $\R$. In the notation of the proof of Theorem~\ref{thrm}, the matrix $\M(\B(f),K)$ has PSD rank $2k+3$ with respect to $\mathbb{Q}(r)$. However, the argument as in Lemmas~\ref{lem73} and~\ref{lem74} shows that every rational completion of $\B(f)$ has PSD rank at least four. We can argue as in Lemmas~\ref{pr3} and~\ref{lem42} and prove that the rational PSD rank of $\M$ is greater than $2k+3$. In other words, the PSD ranks defined with respect to $\mathbb{Q}$ and $\mathbb{Q}(r)$ represent different functions of rational matrices, for any irrational number $r\in\R$. This is a generalization of the result proved in~\cite{FGR} by Fawzi, Gouveia, and Robinson.

\end{document}